\theoremstyle{plain}
 \newtheorem{thm}{Theorem}[section]
 \newtheorem{prop}{Proposition}[section]
 \newtheorem{lem}{Lemma}[section]
 \newtheorem{cor}{Corollary}[section]
\theoremstyle{definition}
 \newtheorem{exm}{Example}[section]
 \newtheorem{dfn}{Definition}[section]
\theoremstyle{remark}
 \newtheorem{rem}{Remark}[section]
 \numberwithin{equation}{section}
\newtheorem{THEO}{Theorem}
\renewcommand{\leq}{\leqslant}
\renewcommand{\geq}{\geqslant}
\title[Orthogonality of quasi-orthogonal polynomials]{ORTHOGONALITY OF QUASI-ORTHOGONAL POLYNOMIALS}
\subjclass[2010]{33C45; 42C05}
\keywords{Orthogonal polynomials, quasi-orthogonal polynomials, positive quadrature formulas, Gaussian quadrature formulas, Christoffel numbers,  inverse problems.}
\author[Bracciali]{\bfseries Cleonice F. Bracciali} 
\address{Departamento de Matem\'{a}tica Aplicada \\
UNESP-Univ Estadual Paulista \\
S\~{a}o Jos\'{e} do Rio Preto, SP \\
Brazil}
\email{cleonice.bracciali@unesp.br}
\author[Marcell\'{a}n]{\bfseries Francisco Marcell\'{a}n} 
\address{Departamento de Matem\'{a}ticas \\
Universidad Carlos III de Madrid \\ 
Legan\'{e}s \\ 
Spain \\
and Instituto de Ciencias Matem\'{a}ticas (ICMAT) \\ 
Cantoblanco \\
Spain}
\email{pacomarc@ing.uc3m.es}
\author[Varma]{\bfseries Serhan Varma} 
\address{Department of Mathematics, Faculty of Science  \\
Ankara University \\
Tando\u{g}an \\
Ankara \\ 
Turkey}
\email{svarma@science.ankara.edu.tr}
\thanks{Research supported by the Brazilian Science Foundation CAPES under project number CSF/PVE 107/2012. The work of the author (CFB) has also been supported by Brazilian Science Foundation CNPq, grants 305208/2015-2 and 402939/2016-6. The work of the author (FM) has also been supported by Ministerio de Econom\'{\i}a y Competitividad of Spain, grant MTM 2015-65888-C4-2-P.} 
\begin{document}

\setcounter{page}{1} \thispagestyle{empty}

\maketitle

\begin{abstract}
A result of P\'olya states that every sequence of quadrature formulas $Q_n(f)$ with $n$ nodes and positive  numbers converges to the integral $I(f)$ of a continuous function $f$ provided $Q_n(f)=I(f)$ for a space of algebraic polynomials of certain degree that depends on $n$. The classical case when the algebraic degree of precision is the highest possible is well-known and the quadrature formulas are the Gaussian ones whose nodes coincide with the zeros of the corresponding orthogonal polynomials and the  numbers are expressed in terms of the so-called kernel polynomials. In many cases it is reasonable to relax the requirement for the highest possible degree of precision in order to gain the possibility to either approximate integrals of more specific continuous functions that contain a polynomial factor or to include additional fixed nodes. The construction of such quadrature processes is related to quasi-orthogonal polynomials. Given a sequence $\left \{ P_{n}\right \}_{n\geq0}$ of monic orthogonal polynomials and a fixed integer $k$, we establish necessary  and sufficient conditions so that the quasi-orthogonal polynomials $\left \{ Q_{n}\right \}_{n\geq0}$ defined by
\[
Q_{n}(x) =P_{n}(x) + \sum \limits_{i=1}^{k-1} b_{i,n}P_{n-i}(x), \  \ n\geq 0,
\]   
with $b_{i,n} \in \mathbb{R}$, and $b_{k-1,n}\neq 0$ for $n\geq k-1$, also constitute a sequence of orthogonal polynomials. Therefore we solve the inverse problem for linearly related orthogonal polynomials.
The characterization turns out to be equivalent to some nice recurrence formulas for the coefficients $b_{i,n}$. We employ these results to establish explicit relations between various types of quadrature rules from the above relations. A number of illustrative examples are provided.
\end{abstract}

\section{Introduction} 

Some results obtained during the early development of the theory of orthogonal polynomials were motivated by the desire to build quadrature formulas with positive Christoffel numbers whose nodes are zeros of known polynomials. Nowadays  these quadratures are succinctly denominated as positive quadrature formulas. The study of this kind of problems was inspired by the Gauss' theorem on quadrature with the highest algebraic degree of precision with nodes at the zeros of
the polynomials orthogonal with respect to the measure of integration as well as by the result of  P\'{o}lya \cite{Pol} on convergence of quadrature rules.
This led Riesz, Fej\'{e}r and Shohat to search for the properties of certain linear combinations of orthogonal polynomials
and the further developments resulted in deep outcome. The most convincing example is the Askey and Gasper \cite{AskGas1,AskGas2} proof of the positivity of certain sums of Jacobi polynomials which played
a key role in the final stage of de Branges' proof of the Bieberbach conjecture. We refer to the nice survey of Askey  \cite{Ask} for the motivation to study positive Jacobi polynomial sums, coming from positive quadratures, and for further information about these natural  connections.

The construction of positive quadrature rules is connected with the so-called quasi-orthogonal polynomials. Let $\left \{ P_{n}\right \}_{n\geq0}$ be a given sequence of monic orthogonal polynomials, generated by the three-term  recurrence relation
\begin{equation} \label{1}
x\, P_{n}(x) = P_{n+1}(x) + \beta _{n}\, P_{n}(x)  + \gamma _{n}\, P_{n-1} (x),\ \ n\geq 0,\ \ \gamma _{n}\neq0,
\end{equation}
with $P_{-1}(x) = 0$ and $P_{\,0}(x)  =  1.$
Then, given $k\in \mathbb{N}$, the polynomials defined by
\begin{equation} \label{2}
Q_{n}(x) = P_{n}(x) + \sum_{i=1}^{k-1}b_{i,n}P_{n-i}(x), \ \ {\rm for} \ n \geq k, 
\end{equation}
are said to be a sequence of quasi-orthogonal polynomials of order $k-1$ or, simply, $(k-1)$-quasi-orthogonal polynomials if $b_{k-1,n}\neq 0$ . Here $b_{i,n}$ for $n \geq 0$,
are real  numbers. By convention we set $b_{0,n}=1$, \ $b_{-1,n}=b_{-2,n}=0$, \  $b_{i,n}=0$ when $i>n$, and also $b_{i,n} = 0$ when $n \geq k$ and $i \geq k$.
Notice that for $k=1$ we have the standard orthogonality. This notion was introduced by Riesz while studying the moment
problem and the reason for this nomenclature is rather simple: $Q_n$ is orthogonal to every polynomial
of degree not exceeding $n-k$ with respect to the functional of orthogonality of $\{ P_n \}_{n\geq0}$. M. Riesz himself considered only the case $k=2$ while Fej\'er  \cite{Fej} concentrated his attention on the specific case when $k=3$, $P_n$ are the Legendre polynomials and $b_{2,n} <0$. It seems that Shohat \cite{31} was the first who studied the general case. The renewed recent interest on the quasi-orthogonal polynomials brought a large number of interesting results. Peherstorfer \cite{PehMC,29,PehCam}  and Xu \cite{36} obtained results concerning the location of the zeros of the quasi-orthogonal polynomials and the positivity of the Christoffel numbers when $\{ P_n \}_{n\geq0}$ are orthogonal on $[-1,1]$ with respect to a measure that belongs to Szeg\H{o}'s class. Xu \cite{37} established general properties of quasi-orthogonal polynomials and, under the assumption that
$Q_n$ is also orthogonal, studied the relation between the Jacobi matrices associated with both sequences. The zeros of some quasi-orthogonal polynomials were studied recently by Beardon and Driver \cite{6} and Brezinski, Driver and Redivo-Zaglia \cite{10}.

Motivated by the relation between positive quadrature rules and quasi-or\-tho\-go\-nal polynomials, we provide necessary and sufficient conditions in order that the sequence of polynomials $\{ Q_n \}_{n\geq0}$, obeying  (\ref{2}), is also orthogonal. The latter problem is purely algebraic in nature. We solve it via a constructive approach by taking into account classical results on Sturm sequences. It becomes evident then that one may look at the solution in terms of a relation between the Jacobi matrices associated with the sequences of orthogonal polynomials.  As a result the solution is explicit in the sense that we establish the connection between the three term recurrence relations that generate the sequences $\{ P_n \}_{n\geq0}$ and $\{ Q_n \}_{n\geq0}$ as well as between the linear functionals related to them. These results allow us to judge about the nodes of two Gaussian type quadrature formulas whose location coincides with the zeros of the polynomials $P_n$ and $Q_n$. Moreover, the Christoffel numbers of the quadrature rules are obtained explicitly as a consequence of the closed forms of the corresponding kernel polynomials which are also derived from our general approach.

The structure of the paper is as follows. In Section 2 we state the necessary and sufficient conditions of the orthogonality of a sequence of quasi-orthogonal polynomials of order $k-1$ as well as the expression of the polynomial $h$ associated with the Geronimus transformation of the initial linear functional. In Section 3, the proofs of those theorems are given as well as an algorithm to deduce the sequence of connection coefficients. Section 4 is focussed on the relation between the corresponding Jacobi matrices. Thus, we have a computational approach to the zeros of $Q_{n}(x)$ since they are the eigenvalues of the $n$th principal leading submatrices of the corresponding Jacobi matrix. The Christoffel numbers are their normalized eigenvectors. We also prove some results concerning the zeros of the polynomial $Q_{n}(x)$ as well as the expression of the kernel polynomials in terms of the initial ones. In Section 5 we analyze some examples illustrating the problems considered in the previous sections. First, the case when $u$ is a symmetric linear functional is considered. The results are implemented for Chebyshev polynomials of the second kind. Second, the non-symmetric case is studied and implemented for Laguerre polynomials. Finally, we study the case of constant coefficients. In such a case, we solve a problem posed in \cite{3} for $k\geq 3$ in such a way in a symmetric case, periodic sequences for the parameters of the three term recurrence relation appear.

\section{Orthogonality of quasi-orthogonal polynomials}

The characterization of those quasi-orthogonal polynomials  (\ref{2}) which form a sequence of orthogonal polynomials themselves  can be approached from a general point of view.
Let $\mathcal{P}$ be the linear space of algebraic polynomials with complex coefficients. Then
$\left\langle u,f\right \rangle $ denotes the action of the linear
functional $u\in \mathcal{P}^{\prime}$ over the polynomial $f\in \mathcal{P},$ where
$\mathcal{P}^{\prime }$ denotes the algebraic dual of the linear
space $\mathcal{P}$. The sequence of monic orthogonal polynomials (SMOP) $\left \{ P_{n}\right \}_{n\geq0}$ with respect to the linear functional $u$ obeys the conditions $\left \langle u,P_{n}P_{m}\right \rangle =K_{n}\delta _{nm}$, where $K_{n}\neq 0$ for all $n\geq 0$, and $\delta _{nm}$ is the Kronecker delta. A linear functional $u$ is said to be regular or quasi-definite (see \cite{14}) when the leading principal submatrices $H_{n}$ of the Hankel matrix $H=\left( u_{i+j}\right) _{i,j\geq 0}$ composed by the moments $u_{i}=\left \langle u,x^{i}\right \rangle $, $i \geq 0$, are non-singular for each $n\geq 0$. When the determinants of $H_{n}$ are positive for all nonnegative integers $n$ the functional is called positive-definite. If  the linear functional $u$ is regular, then  the SMOP $\left \{ P_{n}\right \}_{n\geq0} $ satisfies the three-term recurrence relation (\ref{1}) with $\gamma _{n}\neq 0$ and if $u$ is positive-definite then $\gamma _{n}> 0$. Conversely, if a sequence of polynomials is generated by the recurrence relation (\ref{1}) and $\gamma _{n}\neq 0$, then there is a linear functional $u\in \mathcal{P}^{\prime }$, such that $\{ P_n \}_{n\geq0}$ is a sequence of polynomials orthogonal with respect to $u$ and this is the statement of Favard's theorem (\cite{14}). Moreover, if $\gamma _{n}> 0$ for every $n\in \mathbb{N},$ then the linear functional $u$ is positive-definite and it has  an integral representation $\left \langle u,f\right \rangle =\int_{\mathbb{R}}fd\mu$, $f\in \mathcal{P}$, where $d \mu$ is a positive Borel measure supported on an infinite subset of $\mathbb{R}$ (see \cite{14}).

The linear functional $v\in \mathcal{P}^{\prime}$ is called a rational perturbation of $u\in \mathcal{P}^{\prime}$, if there exist polynomials $p$ and $q$, such that
\begin{equation*}
q (x) v= p(x) u.
\end{equation*}
Detailed information about the direct problems studied from  several points of view can be found in \cite{2,11,18,25,38}.
In particular, the connection formula between the polynomials orthogonal with respect to $v$ and $u$ is called the generalised Christoffel's formula (see \cite{18}).
The relation between the corresponding Jacobi matrices was studied in \cite{17}.

Let $\{P_{n}\}_{n\geq0}$  be a SMOP, $m$ and $k$ are positive integers. Let consider another sequence of monic polynomials $\{Q_{n}\}_{n\geq0} $  related to $\{P_{n}\}_{n\geq0}$  by
\begin{equation}
Q_{n}(x) +\sum \limits_{j=1}^{m-1} a_{j,n} Q_{n-j}(x)
=P_{n}(x) +\sum \limits_{i=1}^{k-1}  b_{i,n} P_{n-i}(x),\ \ n\geq 0,
\label{GenQ}
\end{equation}
with $a_{j,n}, b_{i,n} \in \mathbb{R}$, $ a_{m-1,n}\,  b_{k-1,n} \neq0$.
Then the problem to find necessary and sufficient conditions so that $\{Q_{n}\}_{n\geq0}$ is also a SMOP and to obtain the relation between the corresponding regular linear functionals is called an inverse problem. Observe that we adopt the convention that when either $m$ or $k$ is equal to one, then the corresponding sum does not appear, that is, we interpret it as an empty one. A vast number of interesting results have been obtained on topics related to the inverse problem (see \cite{1,3,4,5,8,9,21,22,27,30}).

In the present contribution we also focus our attention on the quasi-orthogonal polynomials defined by (\ref{2}) under the only natural restriction and $b_{k-1,n}\neq 0$ for $n\geq k-1$. This corresponds to a very general situation when we set $m=1$ and $k\in \mathbb{N}$ in (\ref{GenQ}).
Therefore, in what follows we consider this setting. Many particular results, when one looks for the relation between the functionals $u$ and $v$, with respect to which the polynomial sequences $\{P_n\}_{n\geq0}$ and $\{Q_n\}_{n\geq0}$ are orthogonal, are known \cite{11,12,15,16,24,38}
but the general case that we discuss in the present contribution has not been approached in the literature yet.   In this paper we provide necessary and sufficient conditions so that the sequence of monic polynomials $\{Q_{n}\}_{n\geq0}$ is also orthogonal.

Let $\left \{ P_{n}\right \} _{n\geq 0}$ be a SMOP corresponding to a
regular linear functional $u$. Now we give the necessary and
sufficient conditions ensuring the orthogonality of the monic polynomial
sequence $\left \{ Q_{n}\right \} _{n\geq 0}$ that satisfies the three-term recurrence relation
\begin{equation*} 
xQ_{n}(x) =Q_{n+1}(x) + \tilde{\beta}_{n}Q_{n}\left(x\right) +\tilde{\gamma}_{n}Q_{n-1}(x),\ \ n\geq 0,
\end{equation*}
with the initial conditions $Q_{-1}(x) =0$ and $Q_{0}\left(x\right) =1$, and the condition $\tilde{\gamma}_{n} \neq 0$, for $n\geq1$.

\begin{thm}
\label{maintheorem}
Let $\left\{ Q_{n}\right\} _{n\geq 0}$ be a sequence of monic polynomials
defined by $\left( \ref{2}\right) $. Then $\left\{ Q_{n}\right\} _{n\geq
0}$ is a SMOP with recurrence coefficients $\{ \tilde{\beta}_{n}\}_{n\geq 0}$ and $\{ \tilde{\gamma}_{n}\} _{n\geq 1}$
if and only if the coefficients $b_{0,n}=1$, $\{b_{i,n}\}_{n\geq 1}$, $ 1 \leq i \leq k-1$ , satisfy the following conditions
\begin{equation}
\label{not_null}
\gamma_{n} + b_{2,n} - b_{2,n+1} + b_{1,n} \left(  \beta_{n-1}-\beta_{n}-b_{1,n}+b_{1,n+1}\right) \neq 0, \ \ {\rm for} \ n \geq 1,
\end{equation}
\begin{equation}
\label{eq_for_b1}
b_{1,n+1} = b_{1,n} + \beta _{n} - \beta _{n-k+1}  + \frac{b_{k-2,n-1}}{b_{k-1,n-1}}\gamma _{n-k+1} -\frac{b_{k-2,n}}{b_{k-1,n}}\gamma _{n-k+2}, \  n \geq k,
\end{equation}
\begin{equation}
\label{eq_for_b2}
b_{2,n+1} = b_{2,n} + \gamma _{n} -  \frac{b_{k-1,n}}{b_{k-1,n-1}}\gamma _{n-k+1}
 + b_{1,n}\left( \beta_{n-1}-\beta _{n}-b_{1,n}+b_{1,n+1}\right),  \  n \geq k,
\end{equation}
and
\begin{eqnarray}
\label{eq_for_b_i}
b_{i+2,n+1} & = &   b_{i+2,n}  + b_{i+1,n}\left( \beta_{n-1-i}-\beta_{n}-b_{1,n}+b_{1,n+1}\right) + b_{i,n}\gamma_{n-i} \notag\\
&   & - b_{i,n-1}\left[\gamma_{n} + b_{2,n} - b_{2,n+1} + b_{1,n} \left(  \beta_{n-1}-\beta_{n}-b_{1,n}+b_{1,n+1}\right)  \right],
\end{eqnarray}
for $1 \leq i\leq k-3$ and   $n\geq i+1$.

Moreover, the recurrence coefficients of $\left\{ Q_{n}\right\} _{n\geq 0}$ are given by
\begin{eqnarray}
\label{7} 
\tilde{\beta}_{n} &=&\beta _{n}+b_{1,n}-b_{1,n+1}, \ \ n\geq 0, \\
\label{8} 
\tilde{\gamma}_{n} &=&\gamma _{n}+b_{2,n}-b_{2,n+1}+b_{1,n}\left( \beta_{n-1}-\beta _{n}-b_{1,n}+b_{1,n+1}\right), \ \ n\geq 1,
\end{eqnarray}
and the coefficients $\tilde{\gamma}_{n}$ also satisfy
\begin{equation} \label{gamma_tilde_gamma}
\tilde{\gamma}_{n} = \frac{b_{k-1,n}}{b_{k-1,n-1}}  \gamma _{n-k+1}, \ \ n \geq k.
\end{equation}
\end{thm}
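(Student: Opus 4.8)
The plan is to treat this as a direct computation with the three-term recurrence relation \eqref{1}, comparing coefficients in a suitable polynomial basis. Assume first that $\{Q_n\}_{n\geq 0}$ is a SMOP, so it satisfies its own recurrence $xQ_n = Q_{n+1} + \tilde\beta_n Q_n + \tilde\gamma_n Q_{n-1}$. I would substitute the defining relation \eqref{2} into both sides of this recurrence and also use \eqref{1} to rewrite every product $x\,P_{n-i}(x)$. The left-hand side becomes $\sum_i b_{i,n}\bigl(P_{n-i+1} + \beta_{n-i}P_{n-i} + \gamma_{n-i}P_{n-i-1}\bigr)$, while the right-hand side is $\sum_i b_{i,n+1}P_{n+1-i} + \tilde\beta_n\sum_i b_{i,n}P_{n-i} + \tilde\gamma_n\sum_i b_{i,n-1}P_{n-1-i}$. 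Since $\{P_n\}_{n\geq 0}$ is a basis, equating the coefficient of each $P_{n+1-j}$ for $j=0,1,2,\dots$ yields one scalar identity per $j$.

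The first few of these identities are the crucial ones. The coefficient of $P_{n+1}$ is automatic ($1=1$ using $b_{0,n}=1$). The coefficient of $P_n$ gives $b_{1,n} + \beta_n = b_{1,n+1} + \tilde\beta_n$, which is exactly \eqref{7}. The coefficient of $P_{n-1}$ gives $b_{2,n} + b_{1,n}\beta_{n-1} + \gamma_n = b_{2,n+1} + \tilde\beta_n b_{1,n} + \tilde\gamma_n$; substituting \eqref{7} for $\tilde\beta_n$ and solving for $\tilde\gamma_n$ produces \eqref{8}. Then \eqref{not_null} is simply the requirement $\tilde\gamma_n\neq 0$ for $n\geq 1$ written out via \eqref{8}. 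The coefficient of $P_{n-1-i}$ for general $i$ (i.e. of $P_{n+1-j}$ with $j=i+2$) gives, after substituting the known expressions for $\tilde\beta_n$ and $\tilde\gamma_n$, precisely the recursion \eqref{eq_for_b_i}; note $\tilde\gamma_n b_{i,n-1}$ is exactly the bracketed quantity in the last line of \eqref{eq_for_b_i} times $b_{i,n-1}$, which is why \eqref{8} reappears there. The identities obtained for $j$ near the top of the window — where the index $n-i$ hits $k-1$ and $b_{k,\cdot}=0$ forces extra cancellation — are what yield the boundary formulas \eqref{eq_for_b1}, \eqref{eq_for_b2} and \eqref{gamma_tilde_gamma}; here one uses that $b_{k-1,n}\neq 0$ to divide, and that $b_{i,n}=0$ for $i\geq k$ to kill terms that would otherwise appear. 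I would carry out this bookkeeping carefully at the upper end of the index range, since that is where the constraints $k=1$ conventions and the vanishing conventions for $b_{i,n}$ interact.

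For the converse, assume \eqref{not_null}–\eqref{eq_for_b_i} hold and \emph{define} $\tilde\beta_n$, $\tilde\gamma_n$ by \eqref{7}, \eqref{8}. One then defines a new polynomial sequence $\widehat Q_n$ by the recurrence with these coefficients and initial data $\widehat Q_{-1}=0$, $\widehat Q_0=1$; by \eqref{not_null} we have $\tilde\gamma_n\neq 0$, so by Favard's theorem $\{\widehat Q_n\}$ is a SMOP. It remains to show $\widehat Q_n = Q_n$ for all $n$, i.e. that $Q_n$ as defined by \eqref{2} satisfies the same recurrence. This is exactly the computation above run in reverse: plugging \eqref{2} into $xQ_n - Q_{n+1} - \tilde\beta_n Q_n - \tilde\gamma_n Q_{n-1}$ and expanding via \eqref{1}, the coefficient of each $P_{n+1-j}$ vanishes precisely because of the corresponding hypothesis — $j=1$ from \eqref{7}, $j=2$ from \eqref{8}, the middle range from \eqref{eq_for_b_i}, and the top range from \eqref{eq_for_b1}–\eqref{gamma_tilde_gamma}. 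Hence the expression is identically zero, $Q_n$ obeys the recurrence, and by uniqueness $Q_n=\widehat Q_n$ is orthogonal.

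The main obstacle I anticipate is not any single identity but the careful handling of the boundary of the index range: making sure the conventions $b_{0,n}=1$, $b_{i,n}=0$ for $i>n$ or $i\geq k$, together with the hypothesis $b_{k-1,n}\neq 0$ for $n\geq k-1$, are applied consistently so that the generic recursion \eqref{eq_for_b_i} degenerates correctly into the three special formulas \eqref{eq_for_b1}, \eqref{eq_for_b2}, \eqref{gamma_tilde_gamma} at the top, and that the ranges of validity ($n\geq k$ versus $n\geq i+1$, etc.) come out exactly as stated. A secondary point to check is that \eqref{gamma_tilde_gamma} is genuinely consistent with \eqref{8} — this is a compatibility condition that the boundary identities force, and it is worth verifying that no additional constraint is hidden there.
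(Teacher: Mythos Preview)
Your proposal is correct and is essentially the same argument as the paper's: both substitute \eqref{2} into the three-term recurrence for $\{Q_n\}$, use \eqref{1} to expand every $xP_{n-i}$, and then equate coefficients of $P_{n-j}$ to read off \eqref{7}, \eqref{8}, the generic recursion \eqref{eq_for_b_i}, and the boundary identities \eqref{eq_for_b1}, \eqref{eq_for_b2}, \eqref{gamma_tilde_gamma}. The paper phrases the setup as one step of the Euclidean algorithm (writing $Q_{n+1}=(x-\tilde\beta_n)Q_n-R_{n-1}$ and forcing $R_{n-1}=\tilde\gamma_n Q_{n-1}$), but this is only a cosmetic repackaging of your direct substitution; your explicit handling of the converse via Favard and uniqueness is likewise the same content, just stated more carefully than in the paper.
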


The above relations provide a complete characterization of the orthogonality of the polynomial sequence  $\{ Q_n\}_{n\geq0}$. When $b_{j,n}=b_{j}, j=1,\cdots, k-1,$ you recover Theorem 1 in \cite{3}.

On the other hand, a natural question arises about the relation between the regular  linear functionals  $u$ and $v$ such that $\{P_n\}_{n\geq0}$ and $\{ Q_n\}_{n\geq0}$ are the
corresponding SMOP. In this case, the functional $v$ which describes the orthogonality of the sequence $\{ Q_n\}_{n\geq0}$ is a Geronimus spectral transformation of degree
$k-1$ of the linear functional $u$. In other words, $u= h(x) v$, where $h$ is a polynomial of degree $k-1$ (see \cite{27}). Our next result furnishes a method to determine $h$.

\begin{thm}
\label{theorem_about_h}
The coefficients of the polynomial
\begin{equation}
h(x) = h_{0}+h_{1}x+\cdots+h_{k-2}x^{k-2}+h_{k-1}x^{k-1},
\label{poly_h}
\end{equation}
such that $u  = h(x) v$, are the unique solution of a system of $k$ linear equations, where the entries of the corresponding matrix depend only on the sequences of connection coefficients $\{b_{i,n}\}_{n\geq k-1}$, $i= 1,2, \dots, k-1$.
\end{thm}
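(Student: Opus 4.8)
The plan is to determine the $k$ coefficients $h_0,\dots,h_{k-1}$ from the single identity $u=h(x)v$, equivalently $\langle u,f\rangle=\langle v,h\,f\rangle$ for all $f\in\mathcal P$, by producing $k$ independent linear constraints. The first step is to establish the forward--banded expansion characteristic of a Geronimus transformation of degree $k-1$. Writing $h(x)P_n(x)=\sum_j c_{j,n}Q_j(x)$ and using $u=h(x)v$ together with $\langle v,Q_iQ_j\rangle=0$ for $i\ne j$, the Fourier coefficient is $c_{j,n}=\langle u,P_nQ_j\rangle/\langle v,Q_j^2\rangle$; expanding $Q_j$ by $(\ref{2})$ and invoking $\langle u,P_aP_b\rangle=K_a\delta_{ab}$ gives $\langle u,P_nQ_j\rangle=b_{\,j-n,j}K_n$ when $n\le j\le n+k-1$ (with the convention $b_{0,j}=1$) and $0$ otherwise, so
\[
h(x)P_n(x)=\sum_{j=n}^{\,n+k-1}\frac{b_{\,j-n,j}\,K_n}{\langle v,Q_j^2\rangle}\,Q_j(x).
\]

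I would then specialise to $n=k-1$, so that only the connection coefficients $b_{i,m}$ with $m\ge k-1$ appear. The norms $\langle v,Q_j^2\rangle$ are a priori unknown, but $(\ref{gamma_tilde_gamma})$ of Theorem~\ref{maintheorem} expresses each ratio $\langle v,Q_j^2\rangle/\langle v,Q_{j-1}^2\rangle=\tilde\gamma_j=(b_{k-1,j}/b_{k-1,j-1})\gamma_{j-k+1}$ (for $j\ge k$) through the available data; hence, dividing the above identity by $c_{k-1,k-1}\ne0$, every coefficient $\rho_j:=c_{j,k-1}/c_{k-1,k-1}$ becomes an explicit function of the $b_{i,m}$ $(m\ge k-1)$ and the known $\gamma_m$, and
\[
h(x)\,P_{k-1}(x)=c_{k-1,k-1}\,R(x),\qquad R(x):=\sum_{j=k-1}^{2k-2}\rho_j\,Q_j(x)
\]
is a known polynomial of degree $2k-2$. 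The scalar $c_{k-1,k-1}$ only reflects the freedom of rescaling $u$ and $v$ simultaneously and is fixed by normalising $h$ (e.g.\ $h_{k-1}=1$, forcing $c_{k-1,k-1}=1/\rho_{2k-2}$, well defined since $b_{k-1,2k-2}\ne0$).

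Finally, comparing the coefficients of $x^{2k-2},x^{2k-3},\dots,x^{k-1}$ on the two sides of this identity yields a system of $k$ linear equations in $h_0,\dots,h_{k-1}$ whose coefficient matrix is unipotent and triangular (it is the Toeplitz matrix of the coefficients of the monic polynomial $P_{k-1}$, and in any case is computable from the data), hence nonsingular; the right--hand side is read off from the known polynomial $c_{k-1,k-1}R$. Nonsingularity gives a unique solution, and the coefficient vector of $h$ solves this system by construction, so the two coincide; the remaining $k-1$ coefficient identities then hold automatically, because a degree-$(k-1)$ polynomial $h$ with $u=h(x)v$ is known to exist (the Geronimus transformation, see \cite{27}, also implicit in Theorem~\ref{maintheorem}), i.e.\ $P_{k-1}$ divides $R$. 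A shorter but less structured variant solves instead the $k$ moment equations $u_j=\sum_{\ell=0}^{k-1}h_\ell v_{j+\ell}$, $j=0,\dots,k-1$, whose matrix is the leading $k\times k$ Hankel submatrix of $v$ and is nonsingular precisely because $v$ is regular.

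The step I expect to be delicate is the elimination in the second paragraph: carried out carelessly, the right--hand side still carries the unknown norms $\langle v,Q_j^2\rangle$, and it is exactly identity $(\ref{gamma_tilde_gamma})$ of Theorem~\ref{maintheorem} that allows them to be traded for explicit expressions in the data. A secondary, bookkeeping, point is that, once $\{P_n\}$ and $\{Q_n\}$ are fixed, $h$ is still determined only up to a nonzero multiplicative constant (since $u$ and $v$ are each determined only up to a scalar), so one argues relative to a fixed normalisation of $h$, which supplies one of the $k$ equations.
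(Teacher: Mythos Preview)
Your argument is correct, but it proceeds along a genuinely different axis from the paper's own proof. The paper fixes an index $n\ge k$ and evaluates $\langle u,P_{n-j}Q_n\rangle$ for $j=0,\dots,k-1$: writing $u=h(x)v$ turns each of these into $\sum_{l=0}^{k-1}h_l\langle v,x^lP_{n-j}Q_n\rangle$, and since $\langle v,x^lP_{n-j}Q_n\rangle=0$ for $l<j$ and $=\langle v,Q_n^2\rangle$ for $l=j$, one obtains an upper-triangular $k\times k$ system $T\bar h=b$ with constant diagonal $\langle v,Q_n^2\rangle$ and right-hand side $b_{j,n}\langle u,P_{n-j}^2\rangle$. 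The remaining entries $\langle v,x^lP_{n-j}Q_n\rangle$ are then reduced to the data by expanding $x^lP_{n-j}$ via the Jacobi matrix $\mathbf J_P$ and by solving a second triangular system for the $\langle v,P_{n+r}Q_n\rangle$ in terms of the $b_{i,m}$. In particular the paper never invokes Theorem~\ref{maintheorem}; the only norm that survives is $\langle v,Q_n^2\rangle$, which factors out uniformly.

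You instead take the Christoffel-type expansion $h(x)P_{k-1}(x)=\sum_{j=k-1}^{2k-2}c_jQ_j(x)$ and compare the top $k$ power-coefficients, obtaining a unipotent Toeplitz system built from the coefficients of $P_{k-1}$. Your elimination of the unknown $Q$-norms via $\tilde\gamma_j=\langle v,Q_j^2\rangle/\langle v,Q_{j-1}^2\rangle$ and relation~(\ref{gamma_tilde_gamma}) is legitimate and is precisely where you lean on Theorem~\ref{maintheorem}; this is a trade-off the paper avoids. Your approach is in fact the ``first row'' of the matrix identity $\tilde h(x)\mathbf P=\tilde{\mathbf B}\mathbf Q$ that the paper later records in Section~\ref{section_matrices} as an alternative route to $h$. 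What you gain is a more transparent coefficient matrix (pure Toeplitz in the $P_{k-1}$ coefficients); what the paper's route gains is validity for every $n\ge k$ without appeal to the previous theorem, and a cleaner dependence of the system's \emph{matrix} on the $b_{i,m}$ (after dividing by $\langle v,Q_n^2\rangle$), which is closer to the literal wording of the statement. Your final Hankel-moment variant is also correct but, as you note, its matrix is the Hankel block of $v$ rather than something built from the connection coefficients, so it sits a bit further from what the theorem asserts.
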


A detailed description of the linear system and about the explicit form of the coefficients will be done in the sequel.

It is worth pointing out that an alternative way to compute the coefficients of $h$ is via a relation between the Jacobi matrices related to the sequences  $\{ P_n\}_{n\geq0}$ and $\{ Q_n\}_{n\geq0}$. We discuss this method in Section \ref{section_matrices}.

Since the quasi-orthogonal polynomials arise naturally in the context of quadrature formulae of Gaussian type, many properties that can be classified more than as analytic rather than algebraic, such as the behaviour of their zeros and the positivity of the Christoffel numbers have been analysed. Most of these results deal with rather specific particular cases when either $k$ is a small integer or the orthogonal polynomials belong to classical families. In Section \ref{section_zeros} we obtain some results about the zeros of the polynomials $P_n$ and $Q_n$.

Many illustrative examples are analysed when the linear functional $u$ is a symmetric one, as well as when one deals with constant connection coefficients. The latter problem is motivated by a result in \cite{Grinshpun2004} where  $\{ P_n\}_{n\geq0}$ is the sequence of Chebyshev polynomials.

%%%%%%%%%%%%%%%%%%%%%%%%%%%%%%%%%%%%%%%%%%%%%%%%%%%%%%%%%%%%%%%%%%%%%%%%%%%
\section{Proofs of Theorems \ref{maintheorem} and \ref{theorem_about_h} and the direct problem}

\subsection{Proof of Theorem \ref{maintheorem}}

The core of the overall approach is a classical result of Sturm \cite{Sturm35} on counting the number of real zeros of an algebraic polynomial.  We refer to \cite[Section 10.5]{Rah} and \cite[Sections 2.4, 2.5]{Obr} for detailed information about various versions of Sturm's result as well as about the historical background. We state the general version of Sturm's  theorem in the setting we need. Let $R_{n+1}$ and $R_n$ be polynomials of exact degree $n+1$ and $n$, respectively, with monic leading coefficients. Execute the Euclidean algorithm
\begin{equation} \label{EA}
R_{k+1}(x) = (x-c_{k}) R_{k}(x) - d_{k} R_{k-1}(x),\ \ k=n,n-1,\ldots, 1.
\end{equation}
A careful inspection of the general version of Sturm's theorem shows that the following holds:
\begin{THEO} {\rm (Sturm)}
\label{ThS}
Under the above assumptions, the polynomials $R_{n+1}$ and $R_{n}$ have real and strictly interlacing zeros if and only if
 $d_{k}, \  k=n,n-1,\ldots, 1,$  are positive real numbers. Furthermore, the zeros of the polynomial $R_{k}, \  k=n,n-1,\ldots, 1,$  are all real and the zeros of two consecutive polynomials are strictly interlacing.
\end{THEO}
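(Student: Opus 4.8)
\emph{Proof proposal.} The statement packages two classical facts about a three-term recurrence, and I would read the relations (\ref{EA}) from the bottom up: $R_0,R_1,\dots,R_{n+1}$ is then a finite sequence of monic polynomials with $\deg R_j=j$ obeying $R_{j+1}(x)=(x-c_j)R_j(x)-d_jR_{j-1}(x)$, formally the same as (\ref{1}) with $c_j,d_j$ in the roles of $\beta_j,\gamma_j$. I would prove the two implications separately.

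\textbf{Sufficiency.} Assume $d_k>0$ for $k=1,\dots,n$. The fastest route is to continue the recurrence beyond $R_{n+1}$ with arbitrary positive $d_k$ and arbitrary real $c_k$; by Favard's theorem (\cite{14}) the resulting infinite sequence is a monic OPS for a positive-definite linear functional, so the zeros of each $R_m$ are real and simple and those of $R_m,R_{m+1}$ strictly interlace, in particular for $m=n$. To stay self-contained and obtain the ``Furthermore'' part directly, I would instead induct on $m$, showing that $R_m$ has $m$ distinct real zeros and that the $m-1$ zeros of $R_{m-1}$ strictly separate them; the case $m=1$ is immediate. For the inductive step, evaluate $R_m(x)=(x-c_{m-1})R_{m-1}(x)-d_{m-1}R_{m-2}(x)$ at the zeros $w_1<\dots<w_{m-1}$ of $R_{m-1}$: by the inductive hypothesis $R_{m-2}$ has nonzero alternating signs there, hence so does $R_m$ because $d_{m-1}>0$, producing a zero of $R_m$ in each $(w_j,w_{j+1})$; comparing the signs of $R_m$ at $w_1$ and at $w_{m-1}$ with the signs of $x^m$ at $-\infty$ and $+\infty$ (using that $R_{m-2}$ is positive above its largest zero and of sign $(-1)^{m-2}$ below its smallest) gives one further zero in $(-\infty,w_1)$ and one in $(w_{m-1},+\infty)$. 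Since $\deg R_m=m$ these are all the zeros, so the interlacing pattern holds; take $m=n+1$.

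\textbf{Necessity.} Assume $R_{n+1}$ and $R_n$ have real, strictly interlacing zeros. I would run a downward induction on $k=n,n-1,\dots,1$, proving simultaneously that $R_{k+1}$ and $R_k$ interlace strictly, that the algorithm produces a monic $R_{k-1}$ of exact degree $k-1$ whose zeros strictly separate those of $R_k$, and that $d_k>0$. Let $y_1<\dots<y_k$ be the zeros of $R_k$; from the interlacing with $R_{k+1}$ one reads off $R_{k+1}(y_j)\neq0$ with sign $(-1)^{k-j+1}$, so $g:=(x-c_k)R_k-R_{k+1}$ satisfies $g(y_j)=-R_{k+1}(y_j)$ and is therefore nonzero with alternating sign at the $y_j$; hence $g$ has at least $k-1$ real zeros, and since $\deg g\leq k-1$ it has exactly $k-1$ simple real zeros, so $d_k:=\mathrm{lc}(g)\neq0$ and $R_{k-1}=g/d_k$ is monic of degree $k-1$ with its $k-1$ zeros strictly separating those of $R_k$. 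This interlacing in turn forces $R_{k-1}(y_j)$ to have sign $(-1)^{k-j}$; comparing with $R_{k+1}(y_j)=-d_kR_{k-1}(y_j)$ of sign $(-1)^{k-j+1}$ yields $-d_k<0$, i.e.\ $d_k>0$. Iterating down to $k=1$ gives $d_1,\dots,d_n>0$.

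\textbf{Expected obstacle.} The one genuinely delicate point is the necessity direction: the strict interlacing of $R_{k+1}$ with $R_k$ and the positivity of $d_k$ must be extracted at the \emph{same} inductive step, since each is needed for the other, and one must rule out a degenerate step of the algorithm (a premature degree drop, i.e.\ $d_k=0$) --- which, conveniently, is forced not to occur by the interlacing hypothesis itself. In the sufficiency direction the only nuisance is bookkeeping the two ``outer'' zeros of $R_m$ through the sign of $x^m$ at $\pm\infty$. If one prefers to cite the literature, the statement is the specialization to the present setting of the general Sturm theorem in \cite[Section 10.5]{Rah} and \cite[Sections 2.4, 2.5]{Obr}, and the verification is routine.
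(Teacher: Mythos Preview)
Your proof is correct, and the sufficiency and necessity arguments are both sound. In particular, you handle the one delicate point in the necessity direction cleanly: the sign pattern of $R_{k+1}$ at the zeros of $R_k$ forces the remainder $g=(x-c_k)R_k-R_{k+1}$ to have exactly $k-1$ real zeros, so no premature degree drop can occur, and then the monicity of $R_{k-1}$ pins down the sign of $d_k$. The sufficiency induction is the standard Favard-type argument, and your bookkeeping of the two outer zeros via the sign of $x^m$ at $\pm\infty$ is fine.

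There is nothing to compare against, however: the paper does not prove Theorem~A. It is stated as a classical fact extracted from the general Sturm theorem, with pointers to \cite[Section~10.5]{Rah} and \cite[Sections~2.4, 2.5]{Obr}, and is then used as a black box in the proof of Theorem~\ref{maintheorem}. Your argument is essentially the standard one that can be found in those references (or, equivalently, the proof underlying Wendroff's observation that the paper mentions immediately afterwards). So rather than differing from the paper's approach, you have supplied the proof the paper chose to outsource.
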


It follows immediately from Theorem \ref{ThS} and Favard's theorem that, given two polynomials $R_{n+1}$ and $R_{n}$ with positive leading coefficients and with real and strictly interlacing zeros, the Euclidean algorithm  (\ref{EA}) generates the sequence $R_k$, $k=0,\ldots, n+1$, such that these are the first $n+1$ terms of a sequence of orthogonal polynomials, which can be constructed by using the standard three term recurrence relation.  In other words, any two polynomials of consecutive degrees and interlacing zeros may be ``embedded" in a sequence of orthogonal polynomials. This straightforward but beautiful observation was pointed out by  Wendroff \cite{33} and the statement is nowadays called Wendroff's theorem. Observe that $R_{n+1}$ and $R_n$ generate $R_k$, $k=n-1,\ldots,0$ uniquely ``backwards" via (\ref{EA}) while the sequence $R_k$, $k=0,\ldots,n+1$ of all the polynomials can be extended ``forward" in  various ways. The complete characterization of the sequences of orthogonal polynomials $P_n$ and $Q_n$ that are related by the relation (\ref{2}) is obtained via Theorem \ref{ThS}. \\

{\bf Proof of Theorem \ref{maintheorem}}.

Applying the Euclidean algorithm (\ref{EA}) with ``initial'' polynomials  $R_{n+1}(x) = Q_{n+1}(x)$ and $R_{n}(x)= Q_{n}(x)$ and setting  $c_n = \tilde{\beta}_{n}$, we obtain
\begin{equation*}
Q_{n+1}(x) = (x-\tilde{\beta}_{n}) Q_{n}(x) - R_{n-1}(x),
\end{equation*}
where $R_{n-1}(x)$ is a polynomial of degree at most $n-1$. Using (\ref{2}) together with the recurrence relation (\ref{1}) we conclude that
\begin{equation} \label{rel_R1}
R_{n-1}(x) = \sum_{i=0}^{k}
\left[ b_{i,n}(\beta_{n-i}-\tilde{\beta}_{n} ) - b_{i+1,n+1} + b_{i+1,n} + b_{i-1,n}\gamma_{n-(i-1)} \right]P_{n-i}(x),
\end{equation}
where  $b_{-1,n}=0$ and $b_{0,n}=1$. Moreover, when $n \geq k$, we have $b_{i,n} = 0$ for all $i \geq k$.

Now we can determine necessary and sufficient conditions in order to
the polynomial $R_{n-1}(x)$ coincides with the polynomial $\tilde{\gamma}_{n} Q_{n-1}(x)$, i.e.,
\begin{equation} \label{rel_R2}
R_{n-1}(x) =  \tilde{\gamma}_{n} \left( P_{n-1}(x) +\sum
\limits_{i=1}^{k-1}b_{i,n-1}P_{n-1-i}(x) \right).
\end{equation}

Comparing the coefficients that multiply $P_{n}(x)$ and $P_{n-1}(x)$ in (\ref{rel_R1}) and (\ref{rel_R2}) we derive the conditions
\begin{eqnarray*}
\beta_{n} - \tilde{\beta}_{n}  - b_{1,n+1} + b_{1,n}                    & = & 0, \ \ n \geq 0, \\
b_{1,n}(\beta_{n-1}-\tilde{\beta}_{n}) - b_{2,n+1}+b_{2,n} +\gamma_{n} & = & \tilde{\gamma}_{n}, \ \ n \geq 1,
\end{eqnarray*}
and the latter obviously correspond to (\ref{7}) and (\ref{8}). This means that
\begin{equation}  \label{3}
 \tilde{\gamma}_{n} = \gamma _{n}+b_{2,n}-b_{2,n+1}+b_{1,n}\left( \beta_{n-1}-\beta _{n}-b_{1,n}+b_{1,n+1}\right), \ \ n\geq 1.
\end{equation}
Since $\tilde{\gamma}_{n} \neq 0$, we obtain the constraint
\begin{equation*}
\gamma_{n} + b_{2,n} - b_{2,n+1} + b_{1,n} \left(  \beta_{n-1}-\beta_{n}-b_{1,n}+b_{1,n+1}\right) \neq 0, \ \ {\rm for} \ n \geq 1,
\end{equation*}
which is exactly (\ref{not_null}).

Similarly, comparing the coefficients of $P_{n-2}(x),...,P_{n-k}(x)$ in (\ref{rel_R1}) and (\ref{rel_R2}), we obtain the following conditions:
\begin{eqnarray}
b_{i,n-1}\tilde{\gamma}_{n} & = & b_{i,n}\gamma_{n-i}+b_{i+2,n}-b_{i+2,n+1}+b_{i+1,n}\left( \beta _{n-1-i}-\beta_{n}-b_{1,n}+b_{1,n+1}\right) \text{,}  \notag \\
&  & 1 \leq i\leq k-3, \ \ n\geq i+1, \label{4} \\
b_{k-2,n-1}\tilde{\gamma}_{n} & = & b_{k-2,n}\gamma _{n-k+2}+b_{k-1,n}\left(
\beta _{n-k+1}-\beta _{n}-b_{1,n}+b_{1,n+1}\right),  \notag \\
&  &  n\geq k-1      \label{5}
\end{eqnarray}
and
\begin{eqnarray}
b_{k-1,n-1}\tilde{\gamma}_{n} & = & b_{k-1,n}\gamma _{n-k+1}, \ \ n\geq k.
\label{6}
\end{eqnarray}%

Now (\ref{eq_for_b1}) follows from  (\ref{5}) and (\ref{6}) while (\ref{eq_for_b2}) is a consequence of  (\ref{3}) and (\ref{6}).
Finally, (\ref{3}) and (\ref{4}) imply (\ref{eq_for_b_i}).

It is important to check that at the last step the coefficient $b_{k-1,n+1}$ must be different from zero in order to be consistent with the quasi-orthogonality condition.
This completes the proof.

Theorem \ref{maintheorem} provides also a forward algorithm to compute the coefficients $b_{i,n}$ for $n \geq k+1$.
Starting with coefficients $b_{i,k-1}$, $i=1,2,\ldots,k-1,$ from the linear combination
\begin{equation*}
Q_{k-1}(x) =  P_{k-1}(x) + b_{1,k-1}P_{k-2}(x) + \cdots + b_{k-1,k-1}P_{0}(x),
\end{equation*}
we choose the coefficients $b_{i,k}$ for $i=1,2,\ldots,k-1,$ and write
\begin{equation*}
Q_{k}(x) =  P_{k}(x) + b_{1,k}P_{k-1}(x) + \cdots + b_{k-1,k}P_{1}(x).
\end{equation*}
Then we compute $b_{1,n+1}$, for $n \geq k$, using equation (\ref{eq_for_b1}) and $b_{1,n}$, $b_{k-2,n-1}$, $b_{k-1,n-1}$, $b_{k-2,n}$ and $b_{k-1,n}$ (see the first scheme in Fig.~\ref{fig:1}).
We compute $b_{2,n+1}$, for $n \geq k$, using equation (\ref{eq_for_b2}) and $ b_{2,n}$, $b_{1,n}$, $b_{1,n+1}$, $b_{k-1,n}$ and $b_{k-1,n-1}$ (see the second scheme in Fig.~\ref{fig:1}).

We compute $b_{i+2,n+1}$, for $n \geq k$ and $1 \leq i \leq k-3 $, using equation (\ref{eq_for_b_i}) and
$b_{i+2,n}$, $b_{i+1,n}$, $b_{i,n}$, $b_{i,n-1}$,  and also $b_{1,n}$, $b_{1,n+1}$, $b_{2,n}$, and $b_{2,n+1}$. This is illustrated as the first scheme in  Fig.~\ref{fig:2}.
Alternatively, $b_{i+2,n+1}$, for $n \geq k$ and $1 \leq i \leq k-3 $, is given by
\begin{eqnarray*}
b_{i+2,n+1} & = &   b_{i+2,n} + b_{i+1,n}\left( \beta_{n-1-i}-\beta_{n}-b_{1,n}+b_{1,n+1}\right) + b_{i,n}\gamma_{n-i}  \\
&  & -  b_{i,n-1}\frac{b_{k-1,n}}{b_{k-1,n-1}}\gamma _{n-k+1},
\end{eqnarray*}
using $b_{i+2,n}$, $b_{i+1,n}$, $b_{i,n}$, $b_{i,n-1}$,  and also $b_{1,n}$, $b_{1,n+1}$, $b_{k-1,n-1}$, $b_{k-1,n}$,
(see the second scheme in Fig.~\ref{fig:2}).

\begin{figure}[htb]
 \includegraphics[width=5.5cm]{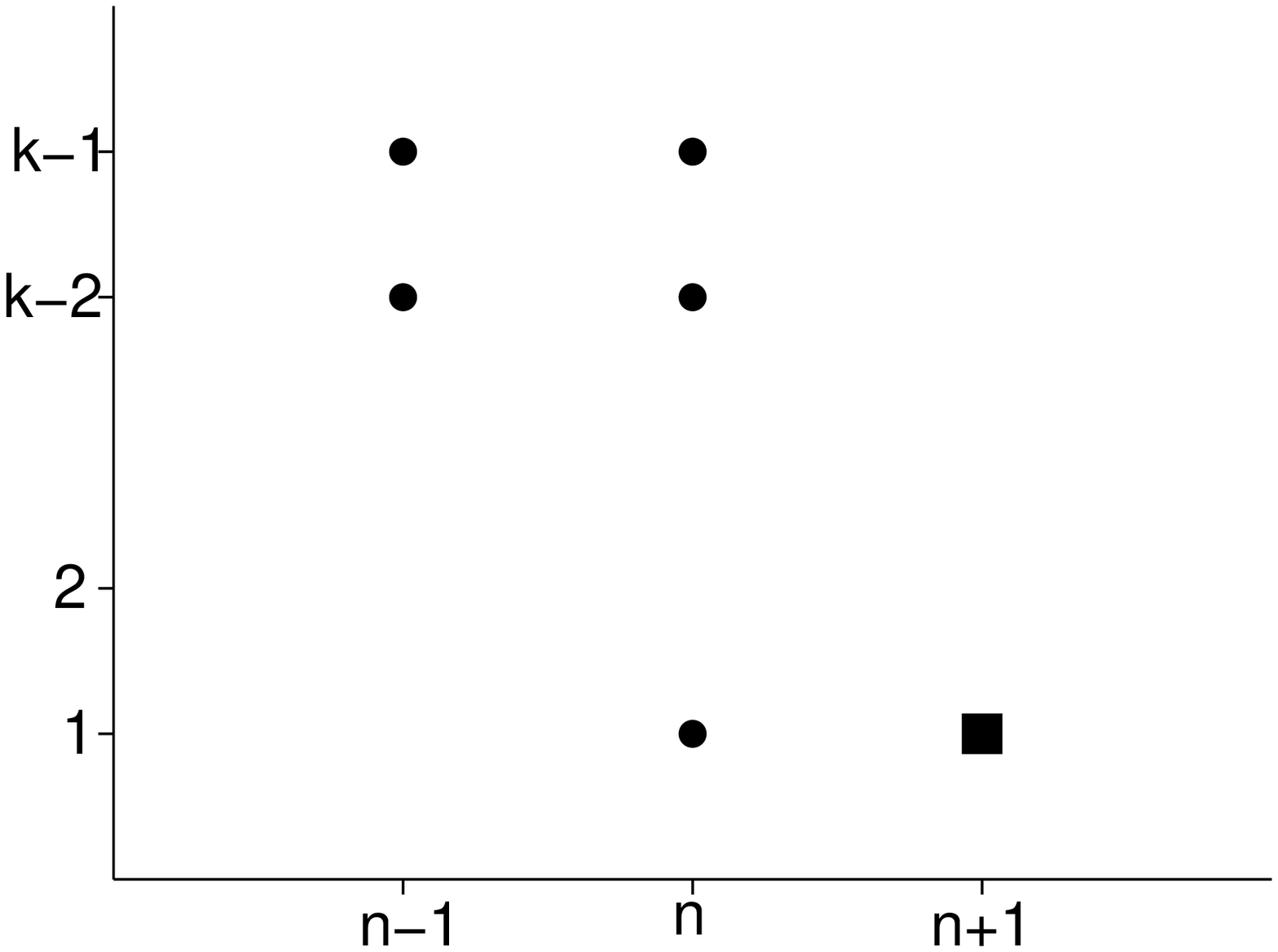}
 \includegraphics[width=5.5cm]{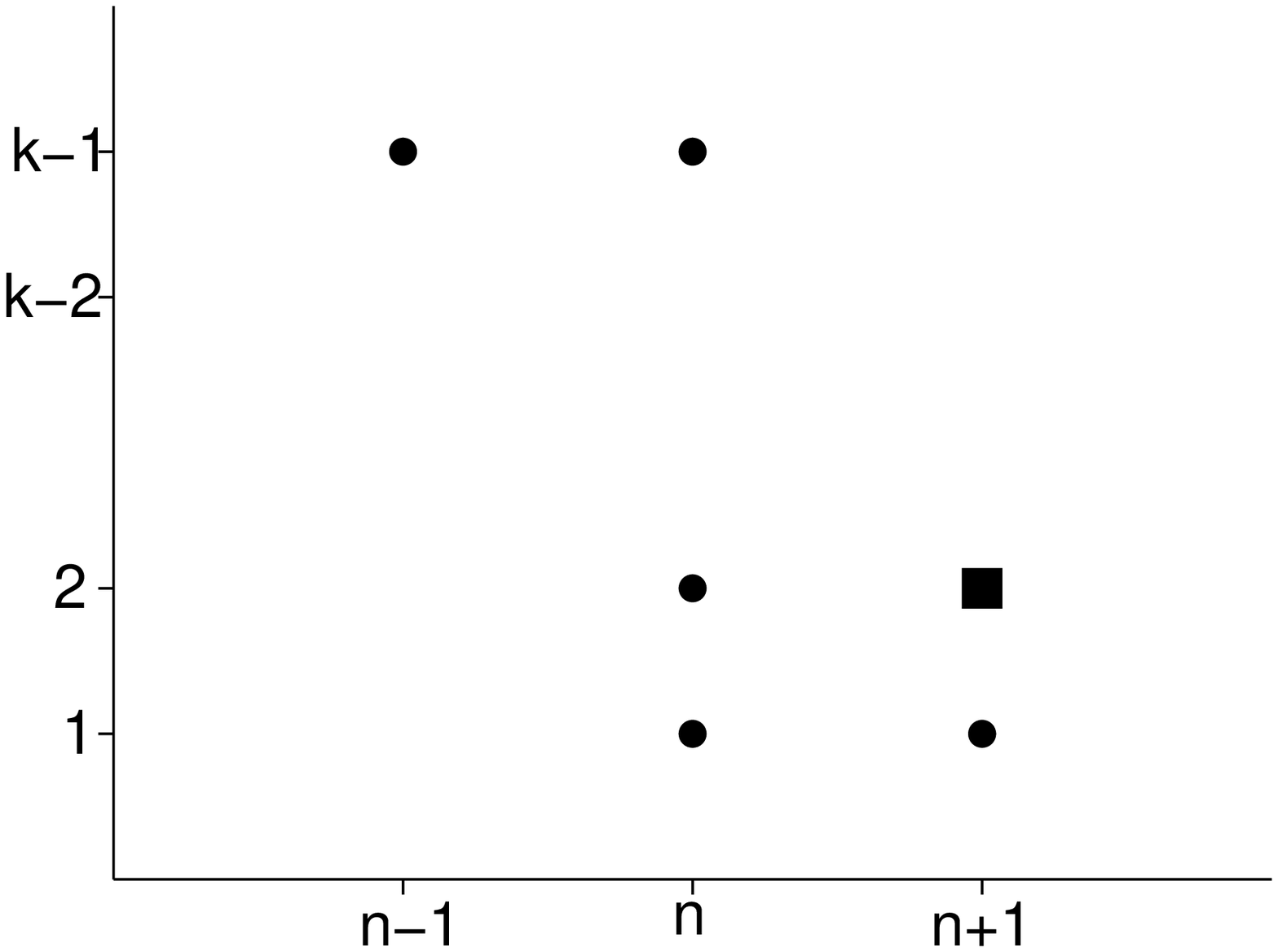}
\caption{Scheme for the calculation of $b_{1,n+1}$ and $b_{2,n+1}$, $n \geq k$.}
\label{fig:1}
\end{figure}

\begin{figure}[htb]
 \includegraphics[width=5.5cm]{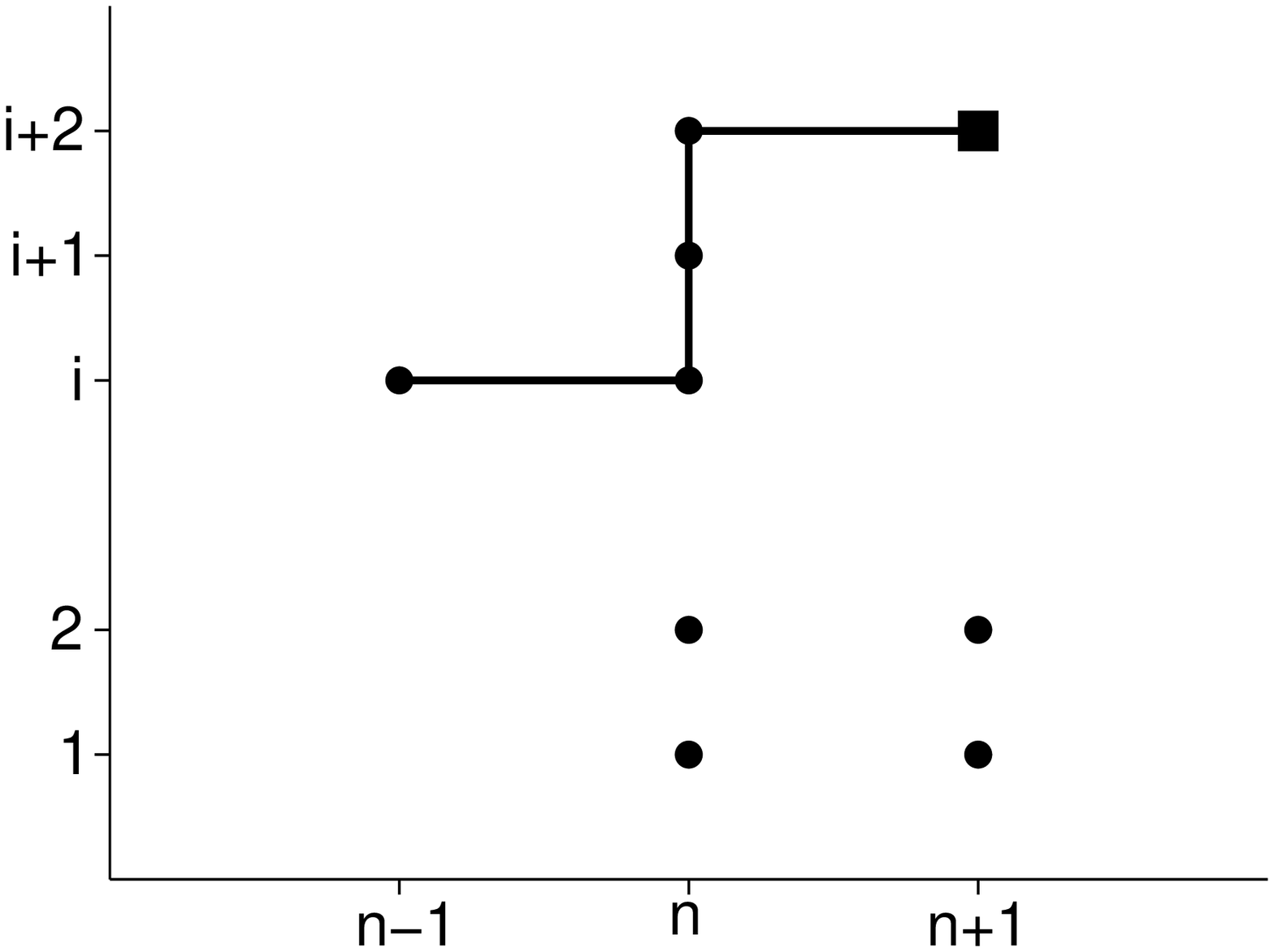}
 \includegraphics[width=5.5cm]{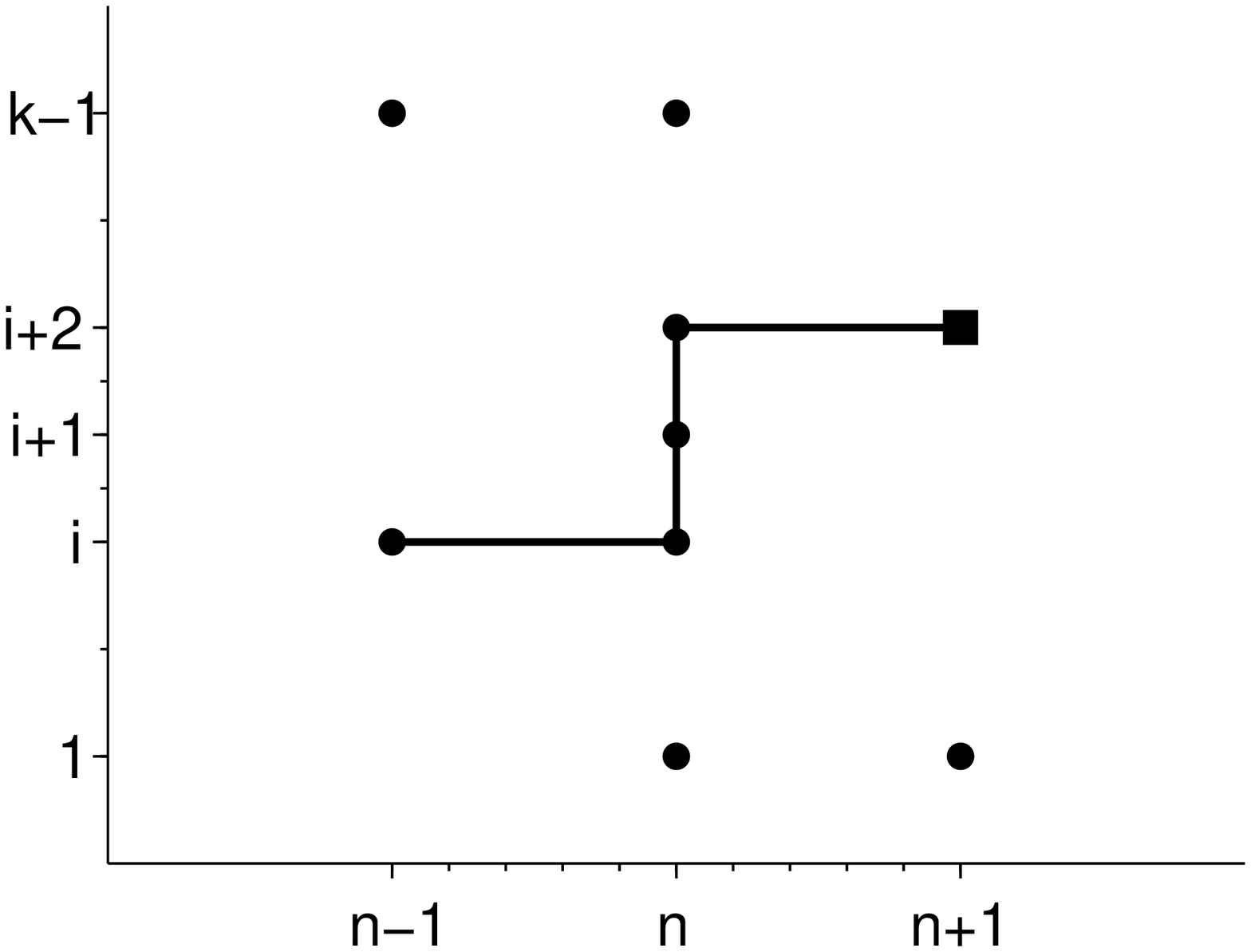}
\caption{Alternative schemes for calculation of $b_{i+2,n+1}$, $n \geq k$.}
\label{fig:2}
\end{figure}

As we have pointed out above, after the computations at level $n+1$, it is necessary to verify if $b_{k-1,n+1} \neq 0$, for $n \geq k$.

The initial coefficients  $b_{0,n}=1$, $b_{1,n}, b_{2,n}, \ldots, b_{n,n}$, for $1 \leq n \leq k-2$, starting from $Q_{k}$ and $Q_{k-1}$, are uniquely determined by the ``backward" process
described by the Euclidean algorithm and by Theorem \ref{ThS}.

Let us notice the key role played by the connection coefficients for the  polynomials $Q_{k-1}$ and  $Q_{k}$  as initial data to run the above algorithm.

As a summary, you can generate the coefficients of quasi-orthogonal polynomials in a recursive way, assuming some initial conditions.

%%%%%%%%%%%%%%%%%%%%%%%%%%%%%%%%%%%%%%%%%%%%%%%%%%%%%%%%%%%%%%%%%%%%%%%%%%%
\subsection{Proof of Theorem \ref{theorem_about_h}}

The dual basis $\left \{ \omega _{n}\right \} _{n\geq 0}\in \mathcal{P}^{^{\prime }}$ of $\left \{ P_{n}\right \} _{n\geq 0}$ is defined, as usual,  by the conditions (see \cite{Maroni1991})
\begin{equation*}
\left \langle \omega _{n},P_{m}\right \rangle =\delta _{nm}.
\end{equation*}%
It is easy to see that the elements of  the basis, dual to SMOP $\left \{ P_{n}\right \}
_{n\geq 0}$ with respect to the regular linear functional $u$, are $\omega _{n}=\frac{P_{n}u}{\left \langle u,P_{n}^{2}\right \rangle }$. Let us
define the left-multiplication of a linear functional $u\in \mathcal{P}^{^{\prime }}$ by any polynomial $f\in \mathcal{P}$ via
\begin{equation*}
\left \langle fu,p\right \rangle =\left \langle u,fp\right \rangle, \
\ \ p\in \mathcal{P}.
\end{equation*}%
Let $\left \{ Q_{n}\right \} _{n\geq 0}$, given by relation $\left( \ref{2}\right) $, be a SMOP with respect to a regular linear functional $v$. According to \cite{Maroni1991},
if we use the expansion of the linear functional $ u $ in terms of the dual basis $\{ \frac{Q_{j}v}{\left \langle
v,Q_{j}^{2}\right \rangle } \} _{j\geq 0}$ of the SMOP $\left \{
Q_{n}\right \} _{n\geq 0}$, in view of orthogonality properties and
relation $\left( \ref{2}\right) $, we obtain the following relation between the
corresponding linear functionals.

\begin{lem}
\begin{eqnarray}
u = \sum \limits_{j=0}^{k-1}\frac{\left \langle u,Q_{j}\right \rangle }{%
\left \langle v,Q_{j}^{2}\right \rangle }Q_{j}v, \ \ \ \ \text{i.e.}, \ \ \ \
 u = h(x) v,  \label{12}
\end{eqnarray}%
where $h(x) = h_{k-1}x^{k-1}+ h_{k-2}x^{k-2} + \cdots +h_{1}x+  h_{0}$ is a
polynomial of degree $(k-1)$ because its leading coefficient is  $h_{k-1} = \frac{b_{k-1,k-1} \left \langle u, 1 \right \rangle }{\left \langle v, Q^{2}_{k-1} \right \rangle} \neq 0.$
\end{lem}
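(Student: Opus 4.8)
The plan is to expand the functional $u$ in the dual basis of the sequence $\{Q_n\}_{n\ge0}$, which is regular by hypothesis, so its dual basis is $\{Q_j v/\langle v,Q_j^2\rangle\}_{j\ge0}$. First I would write $u = \sum_{j\ge0} \lambda_j\, Q_j v$ where $\lambda_j = \langle u, Q_j\rangle/\langle v, Q_j^2\rangle$ are the coordinates; this is the standard Maroni expansion and it is valid a priori as a formal series in $\mathcal{P}'$, since pairing both sides against $P_m$ (equivalently against any fixed polynomial) leaves only finitely many nonzero terms. The crux is then to show that $\lambda_j = 0$ for $j \ge k$, so that the sum truncates and $u = h(x)v$ with $\deg h \le k-1$.

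Second, I would establish the vanishing $\langle u, Q_j\rangle = 0$ for $j \ge k$ directly from the defining relation $(\ref{2})$. Indeed, for $j\ge k$ we have $Q_j = P_j + \sum_{i=1}^{k-1} b_{i,j} P_{j-i}$, a linear combination of $P_j, P_{j-1},\dots,P_{j-k+1}$, all of which have degree $\ge j-k+1 \ge 1$; since $u$ is the functional of orthogonality of $\{P_n\}_{n\ge0}$ and $\langle u, P_n\rangle = \langle u, P_n P_0\rangle = 0$ for $n\ge1$, linearity gives $\langle u, Q_j\rangle = 0$. Hence $\lambda_j=0$ for all $j\ge k$, and $u = \sum_{j=0}^{k-1} \lambda_j Q_j v$, which is exactly the first equality in $(\ref{12})$.

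Third, I would identify this truncated sum as a polynomial times $v$. Each $Q_j$ for $0\le j\le k-1$ is a polynomial of degree $j \le k-1$, and for a polynomial $f$ the functional $fv$ acts by $\langle fv,p\rangle = \langle v, fp\rangle$; thus $\sum_{j=0}^{k-1}\lambda_j Q_j v = h(x)v$ with $h(x) := \sum_{j=0}^{k-1}\lambda_j Q_j(x)$, a polynomial of degree at most $k-1$. To pin down the degree exactly, I would read off the leading coefficient: the only summand contributing to $x^{k-1}$ is $j=k-1$, and since $Q_{k-1}$ is monic the coefficient of $x^{k-1}$ in $h$ equals $\lambda_{k-1} = \langle u, Q_{k-1}\rangle/\langle v, Q_{k-1}^2\rangle$. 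Using $(\ref{2})$ for $n=k-1$, namely $Q_{k-1}=P_{k-1}+b_{1,k-1}P_{k-2}+\cdots+b_{k-1,k-1}P_0$, and again orthogonality of $\{P_n\}$ with respect to $u$, all terms $\langle u,P_m\rangle$ with $m\ge1$ vanish, leaving $\langle u, Q_{k-1}\rangle = b_{k-1,k-1}\langle u,1\rangle$. Therefore $h_{k-1} = b_{k-1,k-1}\langle u,1\rangle/\langle v,Q_{k-1}^2\rangle$.

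Finally I would argue this leading coefficient is nonzero: $\langle u,1\rangle = u_0 \ne 0$ because $u$ is regular (its Hankel determinant $H_0 = u_0$ is nonsingular), $\langle v, Q_{k-1}^2\rangle \ne 0$ because $v$ is regular and $\{Q_n\}$ is its SMOP, and $b_{k-1,k-1}\ne0$ is part of the standing quasi-orthogonality hypothesis on $\{Q_n\}_{n\ge0}$. Hence $h_{k-1}\ne0$ and $\deg h = k-1$ exactly. I do not anticipate a serious obstacle here; the only point requiring mild care is the justification that the formal dual-basis expansion of $u$ converges (in the weak sense on $\mathcal{P}'$) and may be manipulated termwise — this is handled by noting that testing against any polynomial involves only finitely many terms, so all the above identities hold in $\mathcal{P}'$ without convergence issues.
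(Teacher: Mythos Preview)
Your proposal is correct and follows essentially the same approach as the paper: expand $u$ in the dual basis $\{Q_j v/\langle v,Q_j^2\rangle\}_{j\ge0}$ (the paper cites Maroni for this), then use relation~(\ref{2}) together with the orthogonality of $\{P_n\}$ with respect to $u$ to see that the coefficients vanish for $j\ge k$ and to compute $h_{k-1}$. The paper merely sketches this in one sentence before stating the lemma, whereas you have spelled out all the details.
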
 %\\
\pagebreak

{\bf Proof of Theorem \ref{theorem_about_h}}

For $n \geq k$ we have
\begin{eqnarray*}
\left \langle u, P_{m}Q_{n}\right \rangle & = & \left \langle h(x)v,  P_{m}Q_{n}\right \rangle \\
& = & h_{0} \left \langle v,  P_{m}Q_{n}\right \rangle +
      h_{1} \left \langle v, x P_{m}Q_{n}\right \rangle + \cdots +
      h_{k-1} \left \langle v, x^{k-1} P_{m}Q_{n}\right \rangle.
\end{eqnarray*}%
For $m=n,n-1, \ldots, n-(k-1)$, we obtain
\begin{eqnarray} \label{equations}
\left \langle u, P_{n}Q_{n}\right \rangle & = &
h_{0}   \left \langle v,        P_{n}Q_{n}\right \rangle +
h_{1}   \left \langle v, x      P_{n}Q_{n}\right \rangle + \cdots +
h_{k-1} \left \langle v, x^{k-1}P_{n}Q_{n}\right \rangle \notag \\
\left \langle u, P_{n-1}Q_{n}\right \rangle & = &
h_{0}   \left \langle v,        P_{n-1}Q_{n}\right \rangle +
h_{1}   \left \langle v, x      P_{n-1}Q_{n}\right \rangle + \cdots +
h_{k-1} \left \langle v, x^{k-1}P_{n-1}Q_{n}\right \rangle \notag \\
\vdots \ \ \ \ \ \ & & \ \ \ \ \ \ \ \vdots  \\
\left \langle u, P_{n-(k-1)}Q_{n}\right \rangle & = &
h_{0}   \left \langle v,        P_{n-(k-1)}Q_{n}\right \rangle +
h_{1}   \left \langle v, x      P_{n-(k-1)}Q_{n}\right \rangle + \cdots + \notag \\
\ & \ & + h_{k-1} \left \langle v, x^{k-1}P_{n-(k-1)}Q_{n}\right \rangle. \notag
\end{eqnarray}

Since, for $j=0,1,\ldots,k-1,$
\begin{equation*}
 \left \langle v, x^{l} P_{n-j}Q_{n}\right \rangle =
 \left\{
 \begin{array}{ll}
  0,                                       & {\rm if} \   l < j,  \\
  \left \langle v, Q_{n}^2 \right \rangle, & {\rm if} \   l = j,
 \end{array}
 \right.
\end{equation*}
assuming $b_{0,n}=1$ and using (\ref{2}), we derive
\begin{equation*}
\left \langle u, P_{n-j}Q_{n}\right \rangle
=  \left \langle u, P_{n-j}  \sum \limits_{i=0}^{k-1}b_{i,n}P_{n-i}  \right \rangle
= b_{j,n}  \left \langle u,  P_{n-j}^2  \right \rangle, \ \  {\rm for} \  j=0, 1 \ldots, k-1.
\end{equation*}
Now we write the equations (\ref{equations}) as a system of $k$ linear equations
$
T \bar{h} = b,
$
where
\begin{equation*}
T = \left(
\begin{array}{cccccc}
\left \langle v, Q_{n}^2 \right \rangle & \left \langle v, x  P_{n}Q_{n}\right \rangle
& \cdots & \left \langle v, x^{k-2} P_{n}Q_{n}\right \rangle & \left \langle v, x^{k-1} P_{n}Q_{n}\right \rangle \\
0 & \left \langle v, Q_{n}^2\right \rangle
& \cdots & \left \langle v, x^{k-2} P_{n-1}Q_{n}\right \rangle &  \left \langle v, x^{k-1} P_{n-1}Q_{n}\right \rangle \\
0 & 0 & \cdots &  \left \langle v, x^{k-2} P_{n-2}Q_{n}\right \rangle
 &   \left \langle v, x^{k-1} P_{n-2}Q_{n}\right \rangle   \\
\vdots & \vdots & \cdots & \vdots & \vdots \\
0 & 0 & \cdots & \left \langle v, Q_{n}^2 \right \rangle & \left \langle v, x^{k-1} P_{n-(k-2)}Q_{n}\right \rangle  \\
0 & 0 & \cdots & 0 & \left \langle v,  Q_{n}^2 \right \rangle   \\
\end{array}%
\right),
\end{equation*}
\begin{equation*}
\bar{h} = \left(
\begin{array}{c}
  h_{0}\\ h_{1} \\ h_{2} \\ \vdots \\ h_{k-2}  \\ h_{k-1}
\end{array}%
\right) \ \ \ {\rm and} \ \ \
b = \left(
\begin{array}{c}
b_{0,n}  \left \langle u,  P_{n}^2    \right \rangle \\
b_{1,n}  \left \langle u,  P_{n-1}^2  \right \rangle \\
b_{2,n}  \left \langle u,  P_{n-2}^2  \right \rangle \\
\vdots \\
b_{k-2,n}  \left \langle u,  P_{n-(k-2)}^2  \right \rangle \\
b_{k-1,n}  \left \langle u,  P_{n-(k-1)}^2  \right \rangle
\end{array}%
\right).
\end{equation*}
The latter can be rewritten in the form
\begin{equation*}
h_{j} \left \langle v,  Q^2_{n} \right \rangle + \sum_{l=j+1}^{k-1} h_{l} \left \langle v,  x^l P_{n-j}Q_{n} \right \rangle = b_{j,n}\left \langle u,  P_{n-j}^2 \right \rangle, \ \ {\rm for} \ j=0,1,\ldots,k-1.
\end{equation*}
Using the backward technique for solution of  systems of linear equations, we obtain, for $j=k-1,k-2,...,1,0$,
\begin{equation} \label{system}
h_{j} = \left. \left(
b_{j,n}\left \langle u,  P_{n-j}^2 \right \rangle - \sum_{l=j+1}^{k-1} h_{l} \left \langle v,  x^l P_{n-j}Q_{n} \right \rangle \right) \right/ \left \langle v,  Q^2_{n} \right \rangle, \ \   \mathrm{for}\ \ n \geq k.
\end{equation}

In order to simplify (\ref{system}), let $\mathbf{J}_{P}$ be the tridiagonal matrix corresponding to the SMOP $\left \{ P_{n}\right \} _{n\geq 0}$, that is,
\begin{equation*}
x\mathbf{P}=\mathbf{J}_{P}\mathbf{P},
\end{equation*}
where $\mathbf{P}= ( P_{0},P_{1} ,... ) ^{T}$ and
\begin{equation*}
\mathbf{J}_{P} = \left(\begin{array} {ccccccccc}
\beta_{0}  & 1 & 0 & 0 & \dots         & 0 & 0 & \dots\\
\gamma_{1} & \beta_{1} & 1 & 0 & \dots & 0 & 0 & \dots\\
0 & \gamma_{2} & \beta_{2} & 1 & \dots & 0 & 0 & \dots\\
\vdots & \vdots & \vdots & \vdots & \ddots & \vdots & \vdots \\
0 & 0 & 0 & 0 & \dots & \beta_{n-2}  & 1            &   \\
0 & 0 & 0 & 0 & \dots & \gamma_{n-1} & \beta_{n-1}  & \ddots \\
\vdots & \vdots  & \vdots  &  \vdots &       &              & \ddots       & \ddots
\end{array}\right).
\end{equation*}
Notice that, for $j=0,1,\ldots,k-1,$ and $l \geq j$ we have
\begin{equation*}
x^{l} P_{n-j}(x) = \sum_{i=0}^{n+l-j} (\mathbf{J}_{P}^{l})_{n-j,i} P_{i}(x),
\end{equation*}
where $(\mathbf{J}_{P}^{l})_{n-j,i}$ denotes the $(n-j, i)$  entry  of the matrix  $\mathbf{J}_{P}^{l}$.
Then the equalities
\begin{eqnarray}
\label{equ_aux_1}
 \left \langle v, x^{l} P_{n-j}Q_{n}\right \rangle & = &
 \left \langle v, \sum_{i=n}^{n+l-j} (\mathbf{J}_{P}^{l})_{n-j,i} P_{i} Q_{n} \right \rangle \\
 & = & \sum_{i=n}^{n+l-j} (\mathbf{J}_{P}^{l})_{n-j,i} \left \langle v,  P_{i} Q_{n} \right \rangle \notag
\end{eqnarray}
hold for $l \geq j$.

Now it is clear that the inner products $ \left \langle v,  P_{n+r} Q_{n} \right \rangle$,  $r= 0,1,2,\ldots, l-j$, can be expressed in terms of the coefficients $b_{i,n+i}$, $i = 1,2,\ldots, l-j$,  and from the value of $\left \langle v,  Q_{n}^2 \right \rangle$.
Indeed, we rewrite (\ref{2}) in the form
\begin{equation*}
P_{n+r}(x) = Q_{n+r}(x) - \sum \limits_{i=1}^{k-1}b_{i,n+r}P_{n+r-i}(x),
\end{equation*}
which implies
\begin{eqnarray*}
\left \langle v,  P_{n+r} Q_{n} \right \rangle
& = & \left \langle v,  \left(Q_{n+r} - \sum \limits_{i=1}^{k-1}b_{i,n+r}P_{n+r-i} \right)  Q_{n} \right \rangle  \\
& = & - \sum \limits_{i=1}^{r}b_{i,n+r} \left \langle v, P_{n+r-i}Q_{n} \right \rangle,
\end{eqnarray*}
for $r=1,2,\ldots,l-j$, so that
\begin{equation} \label{equ_aux_2}
\left \langle v,  P_{n+r} Q_{n} \right \rangle + \sum \limits_{i=1}^{r-1}b_{i,n+r} \left \langle v, P_{n+r-i}Q_{n} \right \rangle =- b_{r,n+r} \left \langle v, Q_{n}^2 \right  \rangle.
\end{equation}

 Using equations (\ref{equ_aux_2}), for $r=1,2,\ldots,l-j,$ and including the equation $ \left \langle v,  P_{n}Q_{n}   \right \rangle = \left \langle v, Q_{n}^2   \right \rangle,$ we obtain the
 following system of $(l-j+1)$ equations:
\begin{eqnarray}
&
\left(
\begin{array}{cccccccccc}
1 & 0         & 0         & 0         & \cdots & 0       & 0         \\
0 & 1         & 0         & 0         & \cdots & 0       & 0         \\
0 & b_{1,n+2} & 1         & 0         & \cdots & 0       & 0         \\
0 & b_{2,n+3} & b_{1,n+3} & 1         & \cdots & 0       & 0         \\
  \vdots   & \vdots    & \vdots    & \vdots       &    & \vdots  & \vdots  \\
0 & b_{l-j-1,n+l-j}       & b_{l-j-2,n+l-j}   & b_{l-j-3,n+l-j}    & \cdots & b_{1,n+l-j} & 1     \\
\end{array}%
\right) \times \notag
\\ &
\left(
\begin{array}{c}
\left \langle v,  P_{n}Q_{n}     \right \rangle \\
\left \langle v,  P_{n+1}Q_{n}   \right \rangle \\
\left \langle v,  P_{n+2}Q_{n}   \right \rangle \\
\vdots \\
\left \langle v,  P_{n+l-j}Q_{n}   \right \rangle
\end{array}%
\right)
=
\left(
\begin{array}{c}
1 \\
-b_{1,n+1}    \\
-b_{2,n+2}   \\
\vdots \\
-b_{l-j,n+l-j}
\end{array}%
\right)
\left \langle v,  Q_{n}^2 \right \rangle.
\label{system2}
\end{eqnarray}
Let us denote by  $A_{l-j+1}$ the matrix of the latter system.  Then the solution $ \left \langle v,  P_{n+r} Q_{n} \right \rangle$,  $r=0,1,2,\ldots, l-j$, is obtained in terms of the coefficients $b_{i,n}$, $i = 1,2,\ldots, l-j$, and  $\left \langle v,  Q_{n}^2 \right \rangle$.

 Replacing the solution of (\ref{system2}) into (\ref{equ_aux_1}) we conclude that
\begin{eqnarray*}
 \left \langle v, x^{l} P_{n-j}Q_{n}\right \rangle & = & \left(  (\mathbf{J}_{P}^{l})_{n-j,n}, (\mathbf{J}_{P}^{l})_{n-j,n+1}, \ldots, (\mathbf{J}_{P}^{l})_{n-j,n+l-j} \right) \times \\
& & A_{l-j+1}^{-1} \left(
\begin{array}{c}
1 \\
-b_{1,n+1}    \\
-b_{2,n+2}   \\
\vdots \\
-b_{l-j,n+l-j}
\end{array}%
\right) \left \langle v,  Q_{n}^2 \right \rangle,
\end{eqnarray*}
where $A_{l-j+1}^{-1} $ is the inverse of the matrix $A_{l-j+1}$.  Finally we solve the system (\ref{system}) and find all coefficients
$h_{j},$ $j=0,1,\ldots,k-1,$ of the polynomial $h$ as functions of $\beta_n, \gamma_n$ and $b_{i,n}.$
Thus, Theorem \ref{theorem_about_h} is proved.

The above result shows that  the  sequences $\{b_{j,n}\}_ {n\geq k}$,  $j= 0,1, \dots, k-1$,  defined in Theorem \ref{maintheorem} must satisfy the constraints on the coefficients of the polynomial $h(x)$ given in Theorem 2. In other words, the sequences $\{b_{j,n}\}_{n\geq k},$  $j= 0,1, \dots, k-1$, together with the coefficients of the three term recurrence relation,  determine uniquely the polynomial $h(x)$. Moreover, since the matrix $T$ is nonsingular, any polynomial $h(x)$ of the form (\ref{poly_h}) determines uniquely the coefficients $b_{0,n}, b_{1,n}, \ldots, b_{k-1,n}$, for $n\geq k$. We discuss this question thoroughly in the next section.

Notice that the latter observations provide not only an algorithm to calculate $h(x)$, but also an alternative proof about the relation between the Geronimus transformation and the quasi-orthogonal polynomials.

It is easy to see from (\ref{system}) that the leading coefficient of $h(x)$ is given, in an alternatively way, by
\begin{equation} \label{eqhk_1}
h_{k-1}  = \frac{b_{k-1,n}\left \langle u,  P_{n-(k-1)}^2 \right \rangle}{\left \langle v,  Q^2_{n} \right \rangle}, \ \ {\rm for} \ n \geq k-1.
\end{equation}
Considering $n=k-1$ and the normalization $\left \langle u,1\right \rangle =1$, we obtain
\begin{equation*}
h_{k-1}  = \frac{b_{k-1,k-1}}{\left \langle v,Q_{k-1}^{2}\right \rangle } = \frac{b_{k-1,k-1}}{\tilde{\gamma}_{1} \tilde{\gamma}_{2} \cdots \tilde{\gamma}_{k-1}\left \langle v,1\right \rangle } \neq 0,
\end{equation*}
where $\tilde{\gamma}_{1},\tilde{\gamma}_{2},...,\tilde{\gamma}_{k-1}$ are given by $\left( \ref{8}\right) $.

The second coefficient of ${h(x)}$ can also be obtained in an explicit form. Indeed, it follows from (\ref{system}) that
\begin{equation} \label{eqhk-2}
\left \langle v,Q_{n}^2 \right \rangle h_{k-2} =  b_{k-2,n} \left \langle u,   P_{n-(k-2)}^2 \right \rangle - \left \langle v, x^{k-1} P_{n-(k-2)}Q_{n}\right \rangle h_{k-1}.
\end{equation}
Now (\ref{equ_aux_1}), with $l=k-1$ and $ j= k-2$, yields
\begin{eqnarray*}
\left \langle v, x^{k-1} P_{n-(k-2)}Q_{n}\right \rangle
& = & \sum_{i=n}^{n+1} (\mathbf{J}_{P}^{k-1})_{n-(k-2),i} \left \langle v,  P_{i} Q_{n} \right \rangle \\
& = &  (\mathbf{J}_{P}^{k-1})_{n-(k-2),n} \left \langle v,  P_{n} Q_{n} \right \rangle +
 (\mathbf{J}_{P}^{k-1})_{n-(k-2),n+1} \left \langle v,  P_{n+1} Q_{n} \right \rangle \\
 & = &   \sum_{i=0}^{k-2} \beta_{n-i} \left \langle v,  Q_{n}^2 \right \rangle + \left \langle v,  P_{n+1} Q_{n} \right \rangle.
\end{eqnarray*}
Since
\begin{eqnarray*}
   Q_{n+1}(x) = P_{n+1}(x) + b_{1,n+1}  P_{n}(x) + b_{2,n+1}  P_{n-1}(x) + \cdots + b_{k-1,n+1}  P_{n-(k-2)}(x),
\end{eqnarray*}
then
$ \left \langle v,  P_{n+1} Q_{n} \right \rangle = -  b_{1,n+1}  \left \langle v,  Q_{n}^2 \right \rangle. $
Therefore (\ref{eqhk-2}) becomes
\begin{eqnarray*}
\left \langle v,Q_{n}^2 \right \rangle h_{k-2} & = &
 b_{k-2,n} \left \langle u,   P_{n-(k-2)}^2 \right \rangle - \left( \sum_{i=0}^{k-2} \beta_{n-i} - b_{1,n+1} \right) \left \langle v,  Q_{n}^2 \right \rangle h_{k-1} \\
\frac{h_{k-2}}{h_{k-1}} & = & b_{1,n+1}-\sum_{i=0}^{k-2} \beta_{n-i} + \frac{b_{k-2,n}}{h_{k-1}} \frac{\left \langle u, P_{n-(k-2)}^2 \right \rangle}{\left \langle v,  Q_{n}^2 \right \rangle}.
\end{eqnarray*}
Then (\ref{eqhk_1}) implies
\begin{eqnarray*}
\frac{h_{k-2}}{h_{k-1}} & = & b_{1,n+1}-\sum_{i=1}^{k-1} \beta_{n+1-i} + \frac{b_{k-2,n} \left \langle u, P_{n-(k-2)}^2 \right \rangle}
{b_{k-1,n}\left \langle u,  P_{n-(k-1)}^2 \right \rangle}  \\
  & = & b_{1,n+1}-\sum_{i=1}^{k-1} \beta_{n+1-i} + \frac{b_{k-2,n}}{b_{k-1,n}} \gamma_{n-k+2}.
\end{eqnarray*}

The computations of the remaining coefficients of $h(x)$ are rather involved and yield extremely complex explicit expressions so that we omit them.

\begin{rem}
Notice that the above result shows that you can find a direct relation between  the coefficients of the polynomial $h,$ the connection coefficients of the sequences $\left \{P_{n}\right \}_{n\geq0}$ and $\left \{ Q_{n}\right \}_{n\geq0}$ and the coefficients of the three term recurrence relation of the sequence $\left \{ P_{n}\right \}_{n\geq0}$.

\end{rem}

\section{Gaussian type quadrature formulas}
\label{section_matrices}

\subsection{An interpretation in terms of Jacobi matrices}

In this section we provide an alternative approach to the above problems based on the matrix form of the three-term recurrence relations as well as of the
connection coefficients between the two sequences of polynomials. Let $\mathbf{J}_{P}$ and $\mathbf{J}_{Q}$ be the tridiagonal matrices corresponding to the SMOP $\left \{ P_{n}\right \} _{n\geq 0}$ and $\left \{ Q_{n}\right \} _{n\geq 0}$, respectively. Then the three-term recurrence relations satisfied by the
SMOP $\left \{P_{n}\right \} _{n\geq 0}$ and $\left \{ Q_{n}\right \} _{n\geq 0}$ are equivalent to
\begin{equation}  \label{16}
x\mathbf{P}=\mathbf{J}_{P}\mathbf{P,\ \ \ }x\mathbf{Q}=\mathbf{J}_{Q}\mathbf{Q,}
\end{equation}%
where $\mathbf{P}= ( P_{0},P_{1} ,... ) ^{T}$ and $\mathbf{Q}=( Q_{0},Q_{1}, ...) ^{T}$.

On the other hand, $\left( \ref{2}\right) $ reads as
\begin{equation}  \label{17}
\mathbf{Q=\tilde{A}P},
\end{equation}%
where $\mathbf{\tilde{A}=}\left( \tilde{a}_{s,l}\right) _{s,l\geq 1}$ is a
banded lower triangular matrix with entries $\tilde{a}_{s,s}=1$ and $%
\tilde{a}_{s,l}=0$, $s-l>k-1$. Combining $\left( \ref{16}\right) $ and $%
\left( \ref{17}\right) $ we obtain
\begin{equation*}
x\mathbf{\tilde{A}P} =\mathbf{J}_{Q}\mathbf{\tilde{A}P}
\end{equation*}
and then
\begin{equation}
\mathbf{\tilde{A}J}_{P} =\mathbf{J}_{Q}\mathbf{\tilde{A}},\ \ \
\text{i.e.,} \ \ \ \mathbf{J}_{Q}=\mathbf{\tilde{A}J}_{P}\mathbf{\tilde{A}}^{-1}.
\label{18}
\end{equation}%
These represent a succinct matrix form of the relations obtained in Theorem \ref{maintheorem}.

On the other hand, Christoffel formula \cite{18} is equivalent to
\begin{equation}
\tilde{h}(x) \mathbf{P}=\mathbf{\tilde{B}Q}  \label{19}
\end{equation}%
where  $\mathbf{\tilde{B}}= ( \tilde{b}_{s,l})_{s,l\geq 1}$ is a
banded upper triangular matrix with entries $\tilde{b}_{s,s+k-1}=1$, $\tilde{b}_{s,l}=0$, $l-s>k-1$, and $\tilde{h}(x) = h(x)/h_{k-1} $, where $h(x)$ is the polynomial defined in (\ref{12}).

Substituting (\ref{16}) and (\ref{17}) into (\ref{19}),  we obtain
\begin{equation}
\tilde{h}\left( \mathbf{J}_{P}\right) =\mathbf{\tilde{B}\tilde{A}},
\label{20}
\end{equation}
where $\tilde{h}\left( \mathbf{J}_{P}\right) $ is a diagonal matrix of size $\left( 2k-1\right) $.  It is clear that the matrix $\mathbf{\tilde{B}}$ is uniquely determined from $\left( \ref{20}\right) $. Since equalities
(\ref{18}) and (\ref{20}) yield
\begin{equation}
\tilde{h}\left( \mathbf{J}_{Q}\right) =\mathbf{\tilde{A}}\, \tilde{h} \left( \mathbf{J}_{P}\right)\, \mathbf{\tilde{A}}^{-1}  =
\mathbf{\tilde{A}\tilde{B}},  \label{21}
\end{equation}
the matrix $\mathbf{J}_{Q}$ can be determined from (\ref{21}). Notice that (\ref{21}) is the $LU$ factorization of
the matrix $\tilde{h}\left( \mathbf{J}_{Q}\right) $ while $\left( \ref{20}\right) $
is a UL factorization of the matrix $\tilde{h}\left( \mathbf{J}_{P}\right)$.

We also describe relations between the corresponding finite dimensional tridiagonal matrices which appear in the three-term recurrence relations
(\ref{16}) as well as on (\ref{17}). If $\left(
\mathbf{P}\right) _{n}=\left \{ P_{0},P_{1},...,P_{n}\right \} ^{T}$ and $
\left( \mathbf{Q}\right) _{n}=\left \{ Q_{0},Q_{1},...,Q_{n}\right \} ^{T}$, then
(\ref{16}) and (\ref{17}) reduce to
\begin{eqnarray}
x\left( \mathbf{P}\right) _{n} &=&\left( \mathbf{J}_{P}\right) _{n+1}\left(
\mathbf{P}\right) _{n}+P_{n+1}e_{n+1},  \label{22} \\
x\left( \mathbf{Q}\right) _{n} &=&\left( \mathbf{J}_{Q}\right) _{n+1}\left(
\mathbf{Q}\right) _{n}+Q_{n+1}e_{n+1},  \label{23} \\
\left( \mathbf{Q}\right) _{n} &=& (\mathbf{\tilde{A}})_{n+1} ( \mathbf{P}) _{n} , \label{24}
\end{eqnarray}%
where $\left( .\right) _{n}$ denotes the leading principal submatrix of size $n\times n$  of the corresponding infinite one, while here and in what follows,
$e_{j}$ is the $j$-th vector of the canonical basis in $\mathbb{R}^{n+1}$ with all entries zeros except for the $j$-th one, which is
one. Replacing  (\ref{24}) and (\ref{2}) in (\ref{23})  yields
\begin{equation*}
x\ ( \mathbf{\tilde{A}})_{n+1} \left( \mathbf{P}\right) _{n} =
\left[ \left( \mathbf{J}_{Q}\right) _{n+1} ( \mathbf{\tilde{A}})_{n+1} + e_{n+1}
\left( \sum \limits_{i=1}^{k-1}b_{i,n+1}e_{n+2-i}^{T}\right)
\right] \left( \mathbf{P}\right) _{n} + P_{n+1}e_{n+1}.
\end{equation*}
Having in mind (\ref{22}), the latter simplifies to
\begin{equation*}
( \mathbf{\tilde{A}}) _{n+1}\left( \mathbf{J}_{P}\right)
_{n+1}=\left( \mathbf{J}_{Q}\right) _{n+1} ( \mathbf{\tilde{A}})_{n+1} +
(\mathbf{\tilde{A}} ) _{n+1}e_{n+1} \left( \sum
\limits_{i=1}^{k-1}b_{i,n+1}e_{n+2-i}^{T}\right).
\end{equation*}%
Thus, we obtain
\begin{equation*}
( \mathbf{J}_{Q}) _{n+1}= ( \mathbf{\tilde{A}}) _{n+1}
\left[ \left( \mathbf{J}_{P}\right) _{n+1}-e_{n+1}\left( \sum
\limits_{i=1}^{k-1}b_{i,n+1}e_{n+2-i}^{T}\right) \right] ( \mathbf{
\tilde{A}} )_{n+1}^{-1}.
\end{equation*}
This result means that $\left( \mathbf{J}_{Q}\right) _{n+1}$ is a rank-one
perturbation of the matrix $\left( \mathbf{J}_{P}\right) _{n+1}$.

\begin{rem}
The particular cases $ k=2 $, $k=3, $ and $k =4$ of the above matrix method are considered in   \cite{11}, and \cite{24}, respectively.
\end{rem}

\begin{rem}
Having in mind that the zeros of the polynomial $Q_{n+1}$ are the eigenvalues of the matrix $\left( \mathbf{J}_{Q}\right) _{n+1}$, the above expression
means that they are the eigenvalues of a rank one perturbation of the matrix  $\left( \mathbf{J}_{P}\right) _{n+1}$. Therefore, one may estimate them
using the classical theory of eigenvalue perturbations (see \cite{37}). On the other hand, the corresponding Christoffel numbers are the first component of the normalized eigenvector associated with each eigenvalue.
\end{rem}
%%%%%%%%%%%%%%%%%%%%%%%%%%%%%%%%%%%%%%%%%%%%%%%%%%%%%%%%%%%%%%%%%%%%%%%%%%%

\subsection{Results on the zeros of orthogonal polynomials}
\label{section_zeros}

In this section we discuss some properties of these zeros and of their location with respect to those
of $P_n$ provided that both $\{ P_n \}_{n\geq0}$ and $\{ Q_n \}_{n\geq0}$ are sequences of orthogonal polynomials and
they are related by (\ref{2}).

In order to obtain inequalities for the number of zeros of
$Q_n$ which are greater than the largest zero of $P_n$ we need a theorem on Descartes rule of signs
for orthogonal polynomials due to Obrechkoff. Given a finite sequence $\alpha_{0}, \ldots, \alpha_{n}$ of real numbers, let $S(\alpha_{0}, \ldots, \alpha_{n})$ be the number of its sign changes. Recall that $S(\alpha_{0}, \ldots, \alpha_{n})$
is counted in the following natural way. First we discard the zero entries from the sequence and then count a sign change if two consecutive terms in the remaining sequence have opposite signs.  By $Z(f;(a,b))$ we denote the number of the zeros, counting their multiplicities, of the function $f(x)$ in $(a,b)$.
\begin{dfn}
The sequence of functions $f_{0},\ldots, f_{n}$ obeys the general
Descartes' rule of signs in the interval $(a,b)$ if the number of
zeros in $(a,b)$, where the multiple zeros are counted with their
multiplicities, of any real nonzero linear combination
\begin{equation*}
\alpha_{0} f_{0}(x) + \ldots + \alpha_{n} f_{n}(x)
\end{equation*}
does not exceed the number of sign changes in the sequence
$\alpha_{0}, \ldots, \alpha_{n}$.
\end{dfn}

More precisely, this property states that
\begin{equation*}
Z(\alpha_{0} f_{0}(x) + \ldots + \alpha_{n} f_{n}(x);(a,b)) \leq S(\alpha_{0}, \ldots, \alpha_{n})
\end{equation*}
for any $(\alpha_{0}, \ldots, \alpha_{n})\neq (0,\ldots,0)$.

\begin{THEO}[Obrechkoff~\cite{Obr}]\label{thO}
If the sequence of polynomials $\{p_{n}\}_{n\geq0}$ is defined by the recurrence
relation
\begin{equation*} 
x p_{n}(x) = a_{n} p_{n+1}(x) + b_{n} p_{n}(x) + c_{n} p_{n-1}(x), \ \
n \geq 0, 
\end{equation*}
with $p_{-1}(x) = 0$ and \ $p_{0}(x) = 1$,
where $a_{n}, b_{n}, c_{n} \in \mathbb{R}$, $\ a_{n}, c_{n}
> 0$ and $z_{n}$ denotes the largest zero of $p_{n}(x)$, then
the sequence of polynomials $p_{0}, \ldots, p_{n}$ obeys
Descartes' rule of signs in $(z_{n},\infty)$.
\end{THEO}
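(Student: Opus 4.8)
The plan is to reduce the statement to a single positivity fact about Wronskians of orthogonal polynomials and then run the classical inductive argument for rules of Descartes type.

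\emph{Step 1 (reduction).} From $p_{0}=1$ and the recurrence one checks inductively that each $p_{j}$ has positive leading coefficient, and since $a_{n},c_{n}>0$ Favard's theorem shows that $\{p_{j}\}_{j\geq 0}$ is genuinely a sequence of orthogonal polynomials for a positive-definite linear functional. Hence the zeros of $p_{j}$ and $p_{j+1}$ strictly interlace, so the largest zeros satisfy $z_{1}<z_{2}<\cdots <z_{n}$, and therefore $p_{j}(x)>0$ for all $x>z_{n}$ and $0\leq j\leq n$ (trivially for $j=0$, and otherwise because every zero of $p_{j}$ lies in $(-\infty,z_{n}]$). So the theorem asserts precisely that the positive system $(p_{0},\dots ,p_{n})$ is a Descartes system on $(z_{n},\infty)$: $Z\!\left(\sum_{j=0}^{n}\alpha_{j}p_{j};(z_{n},\infty)\right)\leq S(\alpha_{0},\dots ,\alpha_{n})$ for every nonzero $(\alpha_{0},\dots ,\alpha_{n})$.

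\emph{Step 2 (induction on $n$).} The case $n=0$ is trivial. For $f=\sum_{j=0}^{n}\alpha_{j}p_{j}$ with $\alpha_{n}=0$, the claim follows from the inductive hypothesis applied to $(p_{0},\dots ,p_{n-1})$ on $(z_{n-1},\infty)\supset (z_{n},\infty)$, since appending a zero coefficient does not change $S$. If $\alpha_{n}\neq 0$, let $\ell$ be the least index with $\alpha_{\ell}\neq 0$; as $p_{\ell}>0$ on $(z_{n},\infty)$ we have $Z(f;(z_{n},\infty))=Z(f/p_{\ell};(z_{n},\infty))$, and Rolle's theorem gives $Z(f/p_{\ell};(z_{n},\infty))\leq 1+Z\!\left(f'p_{\ell}-fp_{\ell}';(z_{n},\infty)\right)$, using $(f/p_{\ell})'=(f'p_{\ell}-fp_{\ell}')/p_{\ell}^{2}$ and $p_{\ell}^{2}>0$ there. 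Writing $f'p_{\ell}-fp_{\ell}'=\sum_{j=\ell +1}^{n}\alpha_{j}W_{j}$ with $W_{j}:=p_{j}'p_{\ell}-p_{j}p_{\ell}'$ (again with positive leading coefficient), I would show that $(W_{\ell +1},\dots ,W_{n})$ is itself a Descartes system on $(z_{n},\infty)$ --- by iterating the same reduction, using the positivity input of Step 3 --- so that $Z\!\left(f'p_{\ell}-fp_{\ell}';(z_{n},\infty)\right)\leq S(\alpha_{\ell +1},\dots ,\alpha_{n})$. Finally one compares $\mathrm{sign}(\alpha_{\ell})$ with the sign of the next nonzero coefficient: when they differ, $S(\alpha_{0},\dots ,\alpha_{n})=1+S(\alpha_{\ell +1},\dots ,\alpha_{n})$ and the ``$+1$'' from Rolle is absorbed; when they agree it must instead be removed, using the behaviour of $f/p_{\ell}$ near the left endpoint (which one tracks by suitably strengthening the inductive hypothesis), exactly as in the textbook proof of the classical rule of Descartes.

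\emph{Step 3 (the positivity input --- the main obstacle).} What drives the induction is the positivity, on $(z_{n},\infty)$, of the Wronskians $W(p_{j_{0}},\dots ,p_{j_{k}})$ for arbitrary index sets $j_{0}<\cdots <j_{k}$; equivalently, the total positivity of the array $\bigl(p_{j}(x)\bigr)_{0\leq j\leq n,\ x>z_{n}}$, i.e. $\det\!\bigl[p_{j_{i}}(x_{l})\bigr]_{i,l=0}^{k}>0$ for all $j_{0}<\cdots <j_{k}$ and $z_{n}<x_{0}<\cdots <x_{k}$. For consecutive indices this is elementary: $p_{j}'p_{j-1}-p_{j}p_{j-1}'$ is a positive multiple of the confluent Christoffel--Darboux kernel, hence positive on all of $\mathbb{R}$, and the consecutive-index determinants are generalized Vandermonde determinants, again positive on all of $\mathbb{R}$. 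The genuine difficulty --- and the place where the restriction to $(z_{n},\infty)$ is really used --- is the case of non-consecutive indices, which I would deduce from the consecutive case by a short descent. Granting the total positivity, Descartes' rule of signs for $(p_{0},\dots ,p_{n})$ on $(z_{n},\infty)$ follows at once from the variation-diminishing property of totally positive kernels. I expect this Wronskian/total-positivity statement --- equivalently, the handling of the same-sign case in Step 2 --- to be the only real obstacle; everything else is routine bookkeeping with Rolle's theorem and sign counts.
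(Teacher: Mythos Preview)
The paper does not prove Theorem~B at all; it is quoted from Obrechkoff's monograph and used as a black box, so there is no argument in the paper to compare yours against.

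As a self-contained attempt your sketch has the right architecture---reduce to showing that $(p_{0},\dots ,p_{n})$ is a Descartes system on $(z_{n},\infty)$ and run a Rolle-type induction---but the two places you flag as ``the only real obstacle'' are genuine gaps, and neither is as short as you suggest. In Step~2 the induction does not close: after differentiating $f/p_{\ell}$ you need a Descartes bound for $\sum_{j>\ell}\alpha_{j}W_{j}$, but the $W_{j}$ are not orthogonal polynomials, so the inductive hypothesis does not apply; what you actually need is precisely the higher-order Wronskian positivity you defer to Step~3. In Step~3, Christoffel--Darboux gives $W(p_{j},p_{j+1})>0$ everywhere, and on $(z_{n},\infty)$ (where all $p_{j}>0$) this yields that every ratio $p_{k}/p_{j}$ with $j<k$ is strictly increasing, hence all $2\times 2$ Wronskians $W(p_{i},p_{j})$ are positive there. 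But the next round of the Rolle induction requires the $3\times 3$ Wronskians to be positive: by the identity $W\bigl(W(f_{0},f_{i}),W(f_{0},f_{j})\bigr)=f_{0}\,W(f_{0},f_{i},f_{j})$, monotonicity of the new ratios $(f_{j}/f_{0})'/(f_{i}/f_{0})'$ is equivalent to $W(f_{0},f_{i},f_{j})>0$, which does \emph{not} follow from positivity of the $2\times 2$ Wronskians alone. So the ``short descent'' from consecutive indices to arbitrary index sets is exactly the heart of the matter and is not supplied. Your handling of the same-sign subcase in Step~2 is likewise only gestured at.

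In short: the outline is reasonable, but what remains to be proved (total positivity of $\bigl(p_{j}(x)\bigr)$ on $(z_{n},\infty)$ for arbitrary index sets, or an equivalent strengthening of the induction) is essentially the content of Obrechkoff's theorem, not a routine addendum.
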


Since, by Favard's theorem \cite{Fav35}, the
requirements on $p_k(x)$ in Theorem \ref{thO} are equivalent to
the fact that $\{p_{n}\}_{n\geq0}$ is a sequence of orthogonal
polynomials, we obtain

\begin{cor}\label{corO}
Suppose that the orthogonal polynomials $p_k(x)$, $k=0,1,\ldots, n$ be
normalized in such a way that their leading coefficients are all
of the same sign and let $z_n$ be the largest zero of
$p_n(x)$. Then, for any set of real numbers $\alpha_{0}, \ldots, \alpha_{n}$,
which are not identically zero, we get
\begin{equation*}
Z(\alpha_{0} p_0(x) + \cdots + \alpha_{n} p_n(x);(z_n,\infty)
) \leq S(\alpha_{0}, \ldots, \alpha_{n}).
\end{equation*}
\end{cor}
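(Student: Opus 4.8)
The plan is to deduce Corollary \ref{corO} directly from Theorem \ref{thO}, the only delicate point being that the normalization imposed in the corollary (all leading coefficients of one common sign) is weaker than the one built into Theorem \ref{thO}, where $p_0\equiv 1$ and the recurrence coefficients satisfy $a_n,c_n>0$, which in particular forces every leading coefficient to be positive. So the first task is to match the hypotheses of Theorem \ref{thO} by harmless re-normalizations, after which Favard's theorem supplies the recurrence and the conclusion is read off from the definition of obeying the general Descartes' rule of signs.

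First I would record that none of the three quantities occurring in the corollary --- the count $Z(\alpha_0 p_0+\cdots+\alpha_n p_n;(z_n,\infty))$, the number of sign changes $S(\alpha_0,\ldots,\alpha_n)$, and the largest zero $z_n$ of $p_n$ --- is altered if every $p_k$ is replaced by $-p_k$ (this merely replaces each $\alpha_k$ by $-\alpha_k$, and $S$ is invariant under a global sign change) or if every $p_k$ is divided by one and the same positive constant. Hence, after possibly changing all signs I may assume every leading coefficient is positive; then $p_0$ is a positive constant and, dividing through by it, I may further assume $p_0\equiv 1$. Writing $\kappa_m>0$ for the leading coefficient of $p_m$, the monic polynomials $\widehat p_m=p_m/\kappa_m$ are orthogonal with respect to the same (positive-definite) functional, so by Favard's theorem they satisfy $x\widehat p_m=\widehat p_{m+1}+\beta_m\widehat p_m+\gamma_m\widehat p_{m-1}$ with $\gamma_m>0$; multiplying by $\kappa_m$ gives
\begin{equation*}
x\,p_m(x)=\frac{\kappa_m}{\kappa_{m+1}}\,p_{m+1}(x)+\beta_m\,p_m(x)+\gamma_m\frac{\kappa_m}{\kappa_{m-1}}\,p_{m-1}(x),
\end{equation*}
which is of the form required in Theorem \ref{thO} with $a_m=\kappa_m/\kappa_{m+1}>0$, $b_m=\beta_m\in\mathbb{R}$ and $c_m=\gamma_m\kappa_m/\kappa_{m-1}>0$ (for $m=0$ the term $c_0 p_{-1}$ vanishes, so $c_0$ may be taken positive at will); if the corollary is read as supplying only the finitely many $p_0,\ldots,p_n$, I would first extend them to an infinite sequence by continuing the recurrence with arbitrary positive coefficients, which changes neither $p_0,\ldots,p_n$ nor either side of the asserted inequality.

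With the hypotheses so arranged, Theorem \ref{thO} applies and says that $p_0,\ldots,p_n$ obeys the general Descartes' rule of signs in $(z_n,\infty)$; by the definition of that property this is exactly $Z(\alpha_0 p_0+\cdots+\alpha_n p_n;(z_n,\infty))\le S(\alpha_0,\ldots,\alpha_n)$ for every nonzero tuple $(\alpha_0,\ldots,\alpha_n)$, and since the preliminary re-normalizations altered neither side nor $z_n$, the same inequality holds for the originally given polynomials. The main point requiring care --- which I would flag at the outset --- is the tacit reading of ``orthogonal polynomials'' as orthogonal with respect to a positive-definite functional, equivalently a positive Borel measure: this is precisely what makes $\gamma_m>0$, and hence $c_m>0$, available, whereas for a merely regular (quasi-definite) functional Favard yields only $\gamma_m\neq 0$ and Theorem \ref{thO} does not apply.
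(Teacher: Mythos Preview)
Your proof is correct and follows the same approach as the paper, which simply remarks in one sentence that Favard's theorem makes the hypotheses of Theorem~\ref{thO} equivalent to orthogonality (with respect to a positive-definite functional). You have supplied the normalization details and the explicit conversion between the monic and non-monic recurrences that the paper leaves implicit, and your flag about the positive-definite reading of ``orthogonal'' is exactly the point the paper glosses over.
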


Some applications of Theorem \ref{thO} and Corollary \ref{corO} to zeros of orthogonal polynomials were discussed in \cite{Dim01}

Now we are ready to formulate a result concerning inequalities for largest zeros of the polynomials $Q_n$.

\begin{thm}  
Let $\{ P_{n} \}_{n\geq0}$ be a sequence of monic orthogonal polynomials
and let $\{ Q_{n} \}_{n\geq k}$ be defined by (\ref{2}). If the zeros of $P_n(x)$ are
$x_{n,1}< \cdots < x_{n,n}$, then
\begin{equation*}
Z(Q_n(x),(x_{n,n},\infty)) \leq S(1,b_{1,n},\ldots,b_{k-1,n}).
\end{equation*}
\end{thm}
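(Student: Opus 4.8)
The plan is to apply Corollary~\ref{corO} directly with the orthogonal family $\{P_k\}_{k=0}^{n}$, which is legitimate since $\{P_n\}_{n\geq0}$ is a SMOP and hence its members have leading coefficients all equal to $1$, in particular all of the same sign. The largest zero of $P_n$ in the notation of the corollary is $z_n = x_{n,n}$. The only point requiring care is the choice of the coefficients $\alpha_0,\ldots,\alpha_n$ in the linear combination.

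First I would recall from the conventions attached to $(\ref{2})$ that $b_{i,n}=0$ whenever $i>n$ and also $b_{i,n}=0$ when $n\geq k$ and $i\geq k$, together with $b_{0,n}=1$. Hence the expansion $(\ref{2})$ can be written as
\begin{equation*}
Q_n(x) = \sum_{i=0}^{n} b_{i,n} P_{n-i}(x) = b_{0,n}P_n(x) + b_{1,n}P_{n-1}(x) + \cdots + b_{k-1,n}P_{n-k+1}(x),
\end{equation*}
so in the notation of Corollary~\ref{corO} we have $\alpha_{n-i}=b_{i,n}$ for $0\le i\le k-1$ and $\alpha_j=0$ for $0\le j\le n-k$. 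Applying the corollary gives
\begin{equation*}
Z(Q_n(x),(x_{n,n},\infty)) \leq S(\alpha_0,\ldots,\alpha_n) = S(0,\ldots,0,b_{k-1,n},\ldots,b_{1,n},b_{0,n}).
\end{equation*}
Since leading zero entries are discarded when counting sign changes, the right-hand side equals $S(b_{k-1,n},\ldots,b_{1,n},b_{0,n})$, and because the count of sign changes is unchanged upon reversing a finite sequence, this in turn equals $S(b_{0,n},b_{1,n},\ldots,b_{k-1,n}) = S(1,b_{1,n},\ldots,b_{k-1,n})$, which is exactly the claimed bound.

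There is essentially no hard computational step here; the entire argument is bookkeeping about the sign-change functional $S$ together with a correct invocation of Corollary~\ref{corO}. The one subtlety worth flagging, and the place where a careless reader could slip, is the reversal of the index order: the coefficients appear in $(\ref{2})$ in the order $b_{0,n},b_{1,n},\ldots$ attached to the \emph{decreasing} polynomial degrees $P_n,P_{n-1},\ldots$, whereas Corollary~\ref{corO} lists the coefficients $\alpha_0,\ldots,\alpha_n$ attached to the \emph{increasing} degrees $p_0,\ldots,p_n$. The observation that $S$ is invariant under reversal of its argument reconciles the two orders and yields the statement as written. One should also note that the corollary's hypothesis requires nothing about the sign of the $b_{i,n}$ (in particular $b_{k-1,n}\neq0$ is not needed for this inequality, though it holds by assumption for $n\geq k-1$), so the bound is valid for every $n\geq k$.
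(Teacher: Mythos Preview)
Your argument is correct and coincides with the paper's approach: the theorem is stated immediately after Corollary~\ref{corO} and is treated as a direct consequence of it, with no separate proof given. Your remark about reversing the index order (so that $\alpha_{n-i}=b_{i,n}$) and the invariance of $S$ under reversal simply makes explicit the bookkeeping the paper leaves implicit.
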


Despite that in this paper we are interested in the situation when $\{ Q_n \}_{n\geq 0}$ is another sequence of orthogonal polynomials, the above result about the largest zeros of $Q_n$ does not depend on the fact that
the sequence of polynomials obeys an orthogonality property or not.

\begin{cor}
If the zeros of $Q_n$ are also real and simple, denoted by
$y_{n,1}< \cdots < y_{n,n}$ and $S(1,b_{1,n},\ldots,b_{k-1,n})=\ell$, then
\begin{equation*}
y_{n,n-\ell} < x_{n,n}.
\end{equation*}
In particular $y_{n,n-k+1} < x_{n,n}$ independently of the signs of $b_{i,n}\geq 0$ for $i=1,\ldots,n-k+1$.
Moreover, if $b_{i,n}\geq 0$ for $i=1,\ldots,n-k+1$, then $y_{n,n} < x_{n,n}$ which means that all zeros of $Q_n$ precede $x_{n,n}$.
\end{cor}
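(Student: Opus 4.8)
The plan is to obtain the corollary as an immediate consequence of the theorem just stated on $Z(Q_{n}(x),(x_{n,n},\infty))$, so that the whole argument reduces to a counting of zeros. Write $\ell=S(1,b_{1,n},\ldots,b_{k-1,n})$. That theorem gives $Z(Q_{n}(x),(x_{n,n},\infty))\le\ell$. Under the standing hypothesis that $Q_{n}$ has $n$ pairwise distinct real zeros $y_{n,1}<\cdots<y_{n,n}$, the number $Z(Q_{n}(x),(x_{n,n},\infty))$ equals exactly the number of indices $j$ with $y_{n,j}>x_{n,n}$. Hence at most $\ell$ of the zeros of $Q_{n}$ exceed $x_{n,n}$, so at least $n-\ell$ of them lie in $(-\infty,x_{n,n}]$; listing the zeros in increasing order, these must include $y_{n,1},\ldots,y_{n,n-\ell}$, and therefore $y_{n,n-\ell}\le x_{n,n}$.

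To upgrade this to the strict inequality $y_{n,n-\ell}<x_{n,n}$ I would show that $x_{n,n}$ is not a zero of $Q_{n}$. Since $P_{n}(x_{n,n})=0$, relation (\ref{2}) gives $Q_{n}(x_{n,n})=\sum_{i=1}^{k-1}b_{i,n}P_{n-i}(x_{n,n})$, and because the largest zero of $P_{m}$ is strictly increasing in $m$ (a standard consequence of the interlacing of zeros of orthogonal polynomials), $x_{n,n}$ lies to the right of every zero of each $P_{n-i}$ with $1\le i\le k-1$, whence $P_{n-i}(x_{n,n})>0$ for all such $i$. The sum is nontrivial because $b_{k-1,n}\ne0$, and one rules out its vanishing; when $\{Q_{n}\}_{n\ge0}$ is itself orthogonal this follows from the recurrence relations (\ref{7})--(\ref{gamma_tilde_gamma}) of Theorem \ref{maintheorem}, which constrain the sequences $\{b_{i,n}\}$. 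This yields $y_{n,n-\ell}<x_{n,n}$.

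The two particular cases now follow at once. A finite sequence of $k$ real numbers admits at most $k-1$ sign changes, so $\ell\le k-1$ for every choice of the $b_{i,n}$; hence $n-\ell\ge n-k+1$ and $y_{n,n-k+1}\le y_{n,n-\ell}<x_{n,n}$, independently of the signs of the $b_{i,n}$. If in addition $b_{i,n}\ge0$ for $1\le i\le k-1$, then $1,b_{1,n},\ldots,b_{k-1,n}$ has positive leading term and no negative entry, so $\ell=0$ and the theorem gives $Z(Q_{n}(x),(x_{n,n},\infty))=0$: no zero of $Q_{n}$ exceeds $x_{n,n}$. In this case the strictness is automatic, since $Q_{n}(x_{n,n})=\sum_{i=1}^{k-1}b_{i,n}P_{n-i}(x_{n,n})>0$, the term with $i=k-1$ being strictly positive; therefore $y_{n,n}<x_{n,n}$ and every zero of $Q_{n}$ precedes the largest zero of $P_{n}$.

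The only step here that is more than bookkeeping is the strictness in the general case, namely $Q_{n}(x_{n,n})\ne0$, and I expect this to be the main obstacle. I would settle it by converting a hypothetical common zero of $P_{n}$ and $Q_{n}$ at $x_{n,n}$ into a forbidden positive linear combination of $P_{n-1}(x_{n,n}),\ldots,P_{n-k+1}(x_{n,n})$, using the recurrence data of Theorem \ref{maintheorem}; should that turn out to be awkward, one may instead record the weaker but still useful conclusion $y_{n,n-\ell}\le x_{n,n}$, which is exactly what the counting argument delivers and suffices for the quadrature applications.
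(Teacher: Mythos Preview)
Your approach is exactly what the paper intends: in the paper the corollary is stated without proof, as a direct consequence of the preceding theorem, and the implicit argument is precisely the zero--counting you carry out. So the main line --- at most $\ell$ zeros of $Q_n$ lie in $(x_{n,n},\infty)$, hence $y_{n,n-\ell}\le x_{n,n}$, and the two special cases follow from $\ell\le k-1$ always and $\ell=0$ when all $b_{i,n}\ge 0$ --- is correct and matches the paper.

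The one point where you go beyond the paper is the strict inequality in the general case, and here your justification does not hold up. You write that when $\{Q_n\}$ is itself orthogonal, the recurrences (\ref{7})--(\ref{gamma_tilde_gamma}) of Theorem~\ref{maintheorem} rule out $Q_n(x_{n,n})=0$; but those relations constrain the $b_{i,n}$ only through the sequences $\{\beta_n\},\{\gamma_n\},\{\tilde\beta_n\},\{\tilde\gamma_n\}$, and nothing in them prevents the particular linear combination $\sum_{i=1}^{k-1}b_{i,n}P_{n-i}(x_{n,n})$ from vanishing when the $b_{i,n}$ have mixed signs. The paper does not address this subtlety either --- it simply asserts the strict inequality --- so your honest fallback to $y_{n,n-\ell}\le x_{n,n}$ is in fact the mathematically safe conclusion from the counting argument alone. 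In the nonnegative case your computation $Q_n(x_{n,n})=\sum_{i=1}^{k-1}b_{i,n}P_{n-i}(x_{n,n})>0$ is valid and does yield the strict inequality $y_{n,n}<x_{n,n}$; that part is clean.
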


Finally, we obtain a relation between the Stieltjes functions of $u$ and $v.$ Indeed, let define
\begin{equation*}
S_{u}(z) = \sum_{n=0}^{\infty} \frac{u_n}{z^{n+1}} \qquad {\rm and}
\qquad
S_{v}(z) = \sum_{n=0}^{\infty} \frac{v_n}{z^{n+1}},
\end{equation*}
where $u_{n}=\left \langle u,x^{n}\right \rangle $ and
$v_{n}=\left \langle v,x^{n}\right \rangle $.

Since  $\left \langle u,x^{n}\right \rangle  = \left \langle v,h(x) x^{n}\right \rangle$ then $\displaystyle u_{n} = \sum_{j=0}^{k-1} h_{j} v_{j+n}$, \ and
\begin{eqnarray*}
S_{u}(z) & = & \sum_{n=0}^{\infty} \frac{1}{z^{n+1}}  \left( \sum_{j=0}^{k-1} h_{j} v_{j+n}\right) =
\sum_{j=0}^{k-1} h_{j} z^{j}  \left( \sum_{n=0}^{\infty}  \frac{v_{j+n}}{z^{j+n+1}}  \right) \\
& = & \sum_{j=0}^{k-1} h_{j} z^{j}  \left( S_{v}(z) - \sum_{s=0}^{j-1}  \frac{v_{s}}{z^{s+1}}  \right) \\
& = &
\sum_{j=0}^{k-1} h_{j} z^{j} S_{v}(z) - \sum_{j=0}^{k-1} h_{j} z^{j} \left(\sum_{s=0}^{j-1}  \frac{v_{s}}{z^{s+1}}  \right).
\end{eqnarray*}
Therefore,
$ S_{u}(z)= h(z)S_{v}(z) - T(z), $
where $ \displaystyle T(z) = \sum_{j=0}^{k-1} h_{j} z^{j} \left(\sum_{s=0}^{j-1}  \frac{v_{s}}{z^{s+1}}   \right)$ is a polynomial of degree at most $k-2$, and
\begin{equation*}
S_{v}(z)= \frac{S_{u}(z)}{h(z)} + \frac{T(z)}{h(z)}.
\end{equation*}

Since the Stieltjes function $S_{v}$ is a linear spectral modification of $S_{u}$ (\cite{38}), assuming that $u$ is a positive definite linear functional and $h$ is a positive polynomial on the support of a positive Borel measure $d\mu$  associated with $u$, it is well known (see \cite{Lago89} and \cite{Lago90}) that for $n$ large enough each zero $\zeta$ of $h$  with multiplicity $j$ attracts $j$ zeros of $Q_n$. On the other hand, for every fixed $n$, at most $k-1$ zeros of $Q_{n}$ can lie outside $supp(\mu)$. These facts allow us to judge about the location of the zeros of $h$ that lie outside the support of $d\mu$ .

\subsection{Kernel polynomials and Christoffel numbers}

In \cite{BCBvB} quadrature formulas on the real line with the highest degree of accuracy, with positive weights, and with one or two prescribed nodes anywhere on the interval of integration are characterized. Next we will consider a more general problem when we deal with more prescribed nodes. We are interested in the study of Christoffel numbers assuming they are positive numbers, i.e. by choosing  those nodes outside the interval of orthogonality of the initial measure.

Let $K_{n}(x,y;u)$ and  $K_{n}(x,y;v)$ be the kernel polynomials associated with the positive definite linear functionals $u$ and $v$, respectively, i.e.
\begin{equation*}
K_{n}(x,y;u) = \sum_{j=0}^{n} \frac{P_{j}(x) P_{j}(y)}{||P_{j}||^2}
\quad \mbox{and} \quad
K_{n}(x,y;v) = \sum_{j=0}^{n} \frac{Q_{j}(x) Q_{j}(y)}{||Q_{j}||^2},
\end{equation*}
where $||P_{m}||^2 =\left\langle u, P_{m}(x) P_{m}(x) \right \rangle$ \
and \ $||Q_{m}||^2 =\left\langle v, Q_{m}(x) Q_{m}(x) \right \rangle.$ \\

First of all, we will find an algebraic relation between  $K_{n}(x,y;u)$ and  $K_{n}(x,y;v)$.

Writing $K_{n}(x,y;v) $ as
\begin{equation*}
K_{n}(x,y;v) = \sum_{m=0}^{n} \alpha_{n,m}(y) P_{m}(x),
\end{equation*}
we get
\begin{equation*}
\alpha_{n,m}(y) = \frac{\left\langle u, K_{n}(x,y;v) P_{m}(x) \right \rangle}{||P_{m}||^2}.
\end{equation*}

If $m \leq n-k+1,$ then
$ \left\langle u, K_{n}(x,y;v) P_{m}(x) \right \rangle =
\left\langle v, K_{n}(x,y;v) h(x) P_{m}(x) \right \rangle. $
From the reproducing property of the kernel polynomial we get
\begin{equation*}
\alpha_{n,m}(y) = \frac{h(y) P_{m}(y)}{||P_{m}||^2}, \quad for \quad 0 \leq m \leq n-k+1.
\end{equation*}

On the other hand,
\begin{eqnarray*}
\alpha_{n,n-k+2}(y)
& = &  \frac{\left\langle u, K_{n}(x,y;v) P_{n-k+2}(x) \right \rangle}{||P_{n-k+2}||^2} \\
& = &  \frac{\left\langle v, \left[ K_{n+1}(x,y;v) - \frac{Q_{n+1}(x)Q_{n+1}(y)}{||Q_{n+1}||^2} \right] h(x) P_{n-k+2}(y) \right \rangle}{||P_{n-k+2}||^2} \\
& = &  \frac{h(y)P_{n-k+2}(y)}{||P_{n-k+2}||^2} -\frac{Q_{n+1}(y)}{||Q_{n+1}||^2} b_{k-1,n+1}.
\end{eqnarray*}

\begin{eqnarray*}
\alpha_{n,n-k+3}(y)
& = & \frac{\left\langle u, K_{n}(x,y;v) P_{n-k+3}(x) \right \rangle}{||P_{n-k+3}||^2} \\
& = & \frac{\left\langle v, \left[ K_{n+2}(x,y;v) - \frac{Q_{n+2}(x)Q_{n+2}(y)}{||Q_{n+2}||^2} - \frac{Q_{n+1}(x)Q_{n+1}(y)}{||Q_{n+1}||^2} \right] h(x) P_{n-k+3}(x) \right \rangle}{||P_{n-k+3}||^2} \\
& = & \frac{h(y)P_{n-k+3}(y)}{||P_{n-k+3}||^2} - \frac{Q_{n+2}(y)}{||Q_{n+2}||^2} b_{k-1,n+2}  -  \frac{Q_{n+1}(y)}{||Q_{n+1}||^2} b_{k-2,n+1}.
\end{eqnarray*}

Finally,
\begin{eqnarray*}
\alpha_{n,n}(y)
& = & \frac{\left\langle u, K_{n}(x,y;v) P_{n}(x) \right \rangle}{||P_{n}||^2} \\
& = & \frac{\left\langle v, \left[ \displaystyle K_{n+k-1}(x,y;v) - \sum_{j=n+1}^{n+k-1} \frac{Q_{j}(x)Q_{j}(y)}{||Q_{j}||^2} \right] h(x) P_{n}(x) \right \rangle}{||P_{n}||^2} \\
& = & \frac{h(y)P_{n}(y)}{||P_{n}||^2} - \sum_{j=n+1}^{n+k-1} \frac{Q_{j}(y)}{||Q_{j}||^2} b_{j-n,j}.
\end{eqnarray*}

In other words,
\begin{equation*}
K_{n}(x,y;v) =  h(y) K_{n}(x,y;u) - [\mathbb{P}_{n-k+2}^{(k-1)}(x)]^T \mathbb{T}_{n,k-1} \mathbb{D}_{k-1} \mathbb{Q}_{n+1}^{(k-1)}(y),
\end{equation*}
where
\begin{equation*}
\mathbb{P}_{n-k+2}^{(k-1)}(x) = \left( P_{n-k+2}(x),  P_{n-k+3}(x), \ldots,  P_{n}(x) \right)^T,
\end{equation*}
\begin{equation*}
\mathbb{Q}_{n+1}^{(k-1)}(y) = \left( Q_{n+1}(y),  Q_{n+2}(y), \ldots,  Q_{n+k-1}(y) \right)^T,
\end{equation*}

\begin{equation*}
 \mathbb{T}_{n,k-1} =
 \left(
\begin{array}{cccccccccc}
 b_{k-1,n+1} & 0           & 0           & \cdots  & 0         \\
 b_{k-2,n+1} & b_{k-1,n+2} & 0           & \cdots  & 0         \\
 b_{k-3,n+1} & b_{k-2,n+2} & b_{k-1,n+3} & \cdots  & 0         \\
% b_{k-4,n+1} & b_{k-3,n+3} & b_{k-2,n+3} & \cdots  & 0         \\
 \vdots      & \vdots      & \vdots      &         & \vdots    \\
 b_{1,n+1}   & b_{2,n+2}   & b_{3,n+3}   & \cdots  &  b_{k-1,n+k-1}    \\
\end{array}
\right)
\end{equation*}
and
\begin{equation*}
\mathbb{D}_{k-1}  = diag\left( \frac{1}{||Q_{n+1}||^2}, \frac{1}{||Q_{n+2}||^2}, \ldots,  \frac{1}{||Q_{n+k-1}||^2} \right).
\end{equation*}

\medskip

By setting $ \mathbb{L}_{n,k-1} = \mathbb{T}_{n,k-1} \mathbb{D}_{k-1} $
we get
\begin{equation} \label{kernelv}
K_{n}(x,y;v) =  h(y) K_{n}(x,y;u) - [\mathbb{P}_{n-k+2}^{(k-1)}(x)]^T \mathbb{L}_{n,k-1} \mathbb{Q}_{n+1}^{(k-1)}(y).
\end{equation}

If we commute the variables in (\ref{kernelv}),
\begin{equation} \label{kernelvcommuted}
K_{n}(y,x;v) =  h(x) K_{n}(y,x;u) - [\mathbb{P}_{n-k+2}^{(k-1)}(y)]^T \mathbb{L}_{n,k-1} \mathbb{Q}_{n+1}^{(k-1)}(x),
\end{equation}
since the kernel polynomials are symmetric with respect to the variables, then subtracting (\ref{kernelvcommuted}) from  (\ref{kernelv}), we get
\begin{equation} \label{kernel333}
K_{n}(x,y;u) = \frac{ [\mathbb{P}_{n-k+2}^{(k-1)}(y)]^T \mathbb{L}_{n,k-1} \mathbb{Q}_{n+1}^{(k-1)}(x) -[\mathbb{P}_{n-k+2}^{(k-1)}(x)]^T \mathbb{L}_{n,k-1} \mathbb{Q}_{n+1}^{(k-1)}(y)}{h(x)-h(y)}.
\end{equation}

Substituting (\ref{kernel333}) in (\ref{kernelv}) we obtain
\begin{equation} \label{kernel444}
K_{n}(x,y;v) = \frac{ h(y)[\mathbb{P}_{n-k+2}^{(k-1)}(y)]^T \mathbb{L}_{n,k-1} \mathbb{Q}_{n+1}^{(k-1)}(x) - h(x)[\mathbb{P}_{n-k+2}^{(k-1)}(x)]^T \mathbb{L}_{n,k-1} \mathbb{Q}_{n+1}^{(k-1)}(y)}{h(x)-h(y)}.
\end{equation}

In particular, the confluent formula holds
{\small
\begin{equation*}
K_{n}(x,x;v) = \frac{ [h(x)[\mathbb{P}_{n-k+2}^{(k-1)}(x)]^T]^\prime\mathbb{L}_{n,k-1} \mathbb{Q}_{n+1}^{(k-1)}(x) - h(x)[\mathbb{P}_{n-k+2}^{(k-1)}(x)]^T \mathbb{L}_{n,k-1} [\mathbb{Q}_{n+1}^{(k-1)}(x)]^\prime}{-h^\prime(x)}.
\end{equation*}
}
or, alternatively from (\ref{kernelv})
\begin{equation*}
K_{n}(x,x;v) = h(x) K_{n}(x,x;u) - [\mathbb{P}_{n-k+2}^{(k-1)}(x)]^T \mathbb{L}_{n,k-1} \mathbb{Q}_{n+1}^{(k-1)}(x).
\end{equation*}

On the other hand, from (\ref{kernelv}) and taking into account that
\begin{equation*}
 [\mathbb{P}_{n-k+2}^{(k-1)}(x)]^T \mathbb{T}_{n,k-1} =
[\mathbb{Q}_{n+1}^{(k-1)}(x)]^T
- [\mathbb{P}_{n+1}^{(k-1)}(x)]^T
\mathbb{Z}_{n,k-1}
\end{equation*}
where
\begin{equation*}
\mathbb{Z}_{n,k-1} = \left( \begin{array}{cccccccccc}
1  & b_{1,n+2}    & b_{2,n+3}      & \cdots   & b_{k-2,n+k-1}         \\
0  & 1         & b_{1,n+3}         & \cdots   & b_{k-3,n+k-1} \\
0  & 0         & 1                 & \cdots   & b_{k-4,n+k-1} \\
\vdots    & \vdots       & \vdots  &              & \vdots    \\
0  & 0         & 0   & \cdots   & b_{1,n+k-1} \\
0  & 0         & 0   & \cdots   & 1    \\
\end{array} \right),
\end{equation*}
we get 
\begin{equation*}
K_{n+k-1}(x,y;v) =  h(y) K_{n}(x,y;u) +
[\mathbb{P}_{n-k+2}^{(k-1)}(x)]^T  \mathbb{Z}_{n,k-1}
 \mathbb{D}_{k-1} \mathbb{Q}_{n+1}^{(k-1)}(y),
\end{equation*}
and using the same arguments as above to obtain formula (\ref{kernel444}), we get the following compact expression for the kernel polynomial.

\begin{prop}
{\small 
\begin{equation*}
K_{n+k-1}(x,y;v) = \frac{ h(x)[\mathbb{P}_{n+1}^{(k-1)}(x)]^T \mathbb{M}_{n,k-1}    \mathbb{Q}_{n+1}^{(k-1)}(y) - h(y)[\mathbb{P}_{n+1}^{(k-1)}(y)]^T \mathbb{M}_{n,k-1}    \mathbb{Q}_{n+1}^{(k-1)}(x) }{h(x)-h(y)},
\end{equation*}
}
where $  \mathbb{M}_{n,k-1}   = \mathbb{Z}_{n,k-1}  \mathbb{D}_{k-1}$.
\end{prop}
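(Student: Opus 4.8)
The plan is to carry out, at level $n+k-1$, exactly the symmetrization that was used to pass from~(\ref{kernel333}) to~(\ref{kernel444}). Everything rests on the identity recorded immediately before the statement, namely
\begin{equation*}
K_{n+k-1}(x,y;v) =  h(y)\, K_{n}(x,y;u) + [\mathbb{P}_{n+1}^{(k-1)}(x)]^{T}\, \mathbb{M}_{n,k-1}\, \mathbb{Q}_{n+1}^{(k-1)}(y),
\end{equation*}
with $\mathbb{M}_{n,k-1}=\mathbb{Z}_{n,k-1}\mathbb{D}_{k-1}$. For completeness I would first recall why this holds: one substitutes into~(\ref{kernelv}) the matrix relation $[\mathbb{P}_{n-k+2}^{(k-1)}(x)]^{T}\mathbb{T}_{n,k-1} = [\mathbb{Q}_{n+1}^{(k-1)}(x)]^{T} - [\mathbb{P}_{n+1}^{(k-1)}(x)]^{T}\mathbb{Z}_{n,k-1}$ --- which is merely~(\ref{2}) read column by column, the $r$-th column of $\mathbb{T}_{n,k-1}$ gathering the ``tail'' $\sum_{i=r}^{k-1} b_{i,n+r}P_{n+r-i}$ of the expansion of $Q_{n+r}$ and the $r$-th column of $\mathbb{Z}_{n,k-1}$ the ``head'' $\sum_{i=0}^{r-1}b_{i,n+r}P_{n+r-i}$ --- and then recognizes that $[\mathbb{Q}_{n+1}^{(k-1)}(x)]^{T}\mathbb{D}_{k-1}\mathbb{Q}_{n+1}^{(k-1)}(y) = \sum_{j=n+1}^{n+k-1}Q_{j}(x)Q_{j}(y)/||Q_{j}||^{2} = K_{n+k-1}(x,y;v)-K_{n}(x,y;v)$, so that the $K_{n}(x,y;v)$ terms cancel and the displayed identity is left.

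Granting this identity, the rest is purely formal and parallels the earlier computation. Interchanging $x$ and $y$ and using that both $K_{n+k-1}(\cdot,\cdot;v)$ and $K_{n}(\cdot,\cdot;u)$ are symmetric in their two arguments, one obtains the companion identity
\begin{equation*}
K_{n+k-1}(x,y;v) = h(x)\, K_{n}(x,y;u) + [\mathbb{P}_{n+1}^{(k-1)}(y)]^{T}\, \mathbb{M}_{n,k-1}\, \mathbb{Q}_{n+1}^{(k-1)}(x).
\end{equation*}
Multiplying the first identity by $h(x)$, the second by $h(y)$, and subtracting, the two copies of $h(x)h(y)K_{n}(x,y;u)$ cancel and one is left with
\begin{equation*}
\bigl(h(x)-h(y)\bigr)K_{n+k-1}(x,y;v) = h(x)[\mathbb{P}_{n+1}^{(k-1)}(x)]^{T}\mathbb{M}_{n,k-1}\mathbb{Q}_{n+1}^{(k-1)}(y) - h(y)[\mathbb{P}_{n+1}^{(k-1)}(y)]^{T}\mathbb{M}_{n,k-1}\mathbb{Q}_{n+1}^{(k-1)}(x).
\end{equation*}
Dividing by $h(x)-h(y)$ yields the asserted closed form; exactly as for~(\ref{kernel444}), this is legitimate since the numerator is divisible by $h(x)-h(y)$ (the left-hand side being a genuine polynomial in $x$ and $y$), or equivalently the equality may be read in the field of rational functions.

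The cross-multiplications, the cancellation and the final division are routine and cause no trouble. The one place that genuinely demands care --- and the only real obstacle --- is the index bookkeeping behind the starting identity: one must line up the ranges of $\mathbb{P}_{n-k+2}^{(k-1)}$, $\mathbb{P}_{n+1}^{(k-1)}$ and $\mathbb{Q}_{n+1}^{(k-1)}$ with the band patterns of $\mathbb{T}_{n,k-1}$ and $\mathbb{Z}_{n,k-1}$, and verify that the extra kernel contributions $j=n+1,\dots,n+k-1$ enter with precisely the reciprocal norms $1/||Q_{j}||^{2}$ carried by $\mathbb{D}_{k-1}$, so that the telescoping to $K_{n+k-1}-K_{n}$ is exact. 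Once that is pinned down, the statement follows by a direct transcription of the manipulations already performed earlier in this section.
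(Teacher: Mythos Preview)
Your proof is correct and follows essentially the same route as the paper: the paper states the identity $K_{n+k-1}(x,y;v)=h(y)K_{n}(x,y;u)+[\mathbb{P}_{n+1}^{(k-1)}(x)]^{T}\mathbb{M}_{n,k-1}\mathbb{Q}_{n+1}^{(k-1)}(y)$ and then says only ``using the same arguments as above to obtain formula~(\ref{kernel444})'', which is precisely the symmetrize-and-eliminate-$K_{n}(x,y;u)$ manipulation you carry out. Your multiply-by-$h(x)$, multiply-by-$h(y)$, subtract shortcut is a trivially equivalent rearrangement of the paper's subtract-then-back-substitute step, and your careful justification of the starting identity (the column-by-column reading of~(\ref{2}) and the telescoping $\sum_{j=n+1}^{n+k-1}Q_j(x)Q_j(y)/\|Q_j\|^{2}=K_{n+k-1}-K_{n}$) fills in exactly what the paper leaves implicit.
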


\begin{rem}
Proceeding as above one has the expression for the confluent formula $ K_{n+k-1}(x,x;v)$.
\end{rem}

\begin{rem}
If $h(x)=x-a,$ then  $\mathbb{Z}_{n,1}  =1$. Thus
\begin{equation*}
K_{n+1}(x,y;v) = \frac{ (x-a) P_{n+1}(x) Q_{n+1}(y)- (y-a) P_{n+1}(y) Q_{n+1}(x) }{(x-y)||Q_{n+1}||^2 }.
\end{equation*}
If we denote by $y_{n+1,j}, j=1,2,\ldots, n+1,$ the zeros of the polynomial $Q_{n+1}$, we deduce in a straightforward way the value of the Christoffel numbers in the quadrature formula by using the above zeros as nodes.  Indeed,
\begin{equation*}
\frac{1}{K_{n+1}(y_{n+1,j},y_{n+1,j};v)} = \frac{1}{b_{1,n+1} (y_{n+1,j}-a) P_{n}(y_{n+1,j}) Q^{\prime}_{n+1}(y_{n+1,j})}.
\end{equation*}
\end{rem}

%%%%%%%%%%%%%%%%%%%%%%%%%%%%%%%%%%%%%%%%%%%%%%%%%%%%%%%%%%%%%%%%%%%%%%%%%%%
\section{Examples}

In this section we analyze some examples which illustrate the problems considered in the previous sections. First  we focus our attention on the symmetric case which is less complex than the general one. The case when the connection coefficients are constant real numbers is also studied.

\subsection{Symmetric case}

Let us consider the symmetric SMOP $\{P_{n}\}_{n\geq0}$, that is the case when  $\beta_{n}=0$ for $n\geq0$.
According to Theorem \ref{maintheorem}, equations (\ref{eq_for_b1}), (\ref{eq_for_b2}) and (\ref{eq_for_b_i}) become
\begin{equation} \label{eq_for_b1_sym}
b_{1,n+1}  =  b_{1,n} +  \frac{b_{k-2,n-1}}{b_{k-1,n-1}} \gamma_{n-k+1} - \frac{b_{k-2,n}}{b_{k-1,n}} \gamma_{n-k+2}, \quad n \geq k,
\end{equation}
\begin{equation} \label{eq_for_b2_sym}
b_{2,n+1} = b_{2,n} + \gamma_{n} -  \frac{b_{k-1,n}}{b_{k-1,n-1}}\gamma_{n-k+1} + b_{1,n}\left(b_{1,n+1}-b_{1,n}\right),  \  n \geq k,
\end{equation}
and for $1 \leq i \leq k-3$
\begin{eqnarray} 
b_{i+2,n+1} & = &   b_{i+2,n}  + b_{i+1,n}\left(b_{1,n+1} -b_{1,n}\right) + b_{i,n}\gamma_{n-i}  \notag\\
&   & - b_{i,n-1}\left[\gamma_{n} + b_{2,n} - b_{2,n+1} + b_{1,n} \left(b_{1,n+1}-b_{1,n}\right)  \right].  \label{eq_for_b_i_sym}
\end{eqnarray}
Equations (\ref{7}) and (\ref{8}) become
\begin{eqnarray*}
\tilde{\beta}_{n}  &=&  b_{1,n}-b_{1,n+1}, \ \ \ n\geq 0, \\
\tilde{\gamma}_{n} &=&  \gamma _{n}+b_{2,n}-b_{2,n+1}+b_{1,n}\left(b_{1,n+1}-b_{1,n}\right), \ \ \ n\geq 1,
\end{eqnarray*}
or alternatively
\begin{eqnarray}
\tilde{\beta}_{n}  &=&  \frac{b_{k-2,n}}{b_{k-1,n}} \gamma_{n-k+2} -
\frac{b_{k-2,n-1}}{b_{k-1,n-1}} \gamma_{n-k+1}, \quad n \geq k,
\notag \\
\tilde{\gamma}_{n} &=& \frac{b_{k-1,n}}{b_{k-1,n-1}} \gamma_{n-k+1} \quad n \geq k. \label{sym_gamma}
\end{eqnarray}

\noindent \textbf{Step 1. } If we fix $b_{1,n} =  b_{1}$ for $n \geq k$, then from (\ref{eq_for_b1_sym})
\begin{eqnarray*}
  \frac{b_{k-2,n}}{b_{k-1,n}}\gamma _{n-k+2} = \frac{b_{k-2,n-1}}{b_{k-1,n-1}}\gamma _{n-k+1} = \cdots =
 \frac{b_{k-2,k-1}}{b_{k-1,k-1}}\gamma _{1},
\end{eqnarray*}
and it is easy to conclude that $\tilde{\beta}_{n}  = 0,$ for $n\geq k.$

Relation (\ref{eq_for_b2_sym}) yields
\begin{equation} \label{b2_sym_const}
b_{2,n+1}  = b_{2,n} +\gamma _{n} -  \frac{b_{k-1,n}}{b_{k-1,n-1}}\gamma _{n-k+1}.
\end{equation}

\noindent \textbf{Step 2. } If we impose the restrictions $b_{1,n} = b_{1}$ and $b_{2,n} = b_{2}$, for $n \geq k$, then from (\ref{b2_sym_const}) and (\ref{sym_gamma}), we obtain
\begin{equation*}
\gamma_{n} = \frac{b_{k-1,n}}{b_{k-1,n-1}}\gamma _{n-k+1} = \tilde{\gamma}_{n}, \ \ {\rm for} \ \ n \geq k.
\end{equation*}

\begin{prop}
If $b_{1,n} = b_{1}$ and $b_{2,n} = b_{2}$, for $n \geq k$, then
\begin{eqnarray*}
\tilde{\beta}_{n}  &=& 0, \ \ \ n\geq k, \\
\tilde{\gamma}_{n} &=&  \gamma_{n}, \ \ \ n\geq k.
\end{eqnarray*}
This means that $Q_{n}^{[k+1]}(x) = P_{n}^{[k+1]}(x)$.
\end{prop}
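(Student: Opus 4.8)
The two displayed equalities are exactly the conclusions reached in Steps~1 and~2 above, so the plan is simply to assemble them and then spell out the consequence for associated polynomials. First I would re-examine the effect of the hypothesis $b_{1,n}=b_{1}$ for $n\ge k$. Substituting $b_{1,n+1}=b_{1,n}$ into (\ref{eq_for_b1_sym}) gives
\[
\frac{b_{k-2,n}}{b_{k-1,n}}\,\gamma_{n-k+2}=\frac{b_{k-2,n-1}}{b_{k-1,n-1}}\,\gamma_{n-k+1},\qquad n\ge k,
\]
so that $a_{n}:=\frac{b_{k-2,n}}{b_{k-1,n}}\gamma_{n-k+2}$ is independent of $n$ for $n\ge k-1$. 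Feeding this into the alternative (symmetric) expression for the recurrence coefficient, namely $\tilde\beta_{n}=a_{n}-a_{n-1}$ for $n\ge k$ (the first line of the display containing (\ref{sym_gamma})), we get $\tilde\beta_{n}=0$ for $n\ge k$. The same hypothesis also kills the term $b_{1,n}(b_{1,n+1}-b_{1,n})$ in (\ref{eq_for_b2_sym}), which therefore collapses to (\ref{b2_sym_const}),
\[
b_{2,n+1}=b_{2,n}+\gamma_{n}-\frac{b_{k-1,n}}{b_{k-1,n-1}}\,\gamma_{n-k+1},\qquad n\ge k.
\]

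Next I would bring in the second hypothesis $b_{2,n}=b_{2}$ for $n\ge k$. Since then $b_{2,n+1}=b_{2,n}$, the last display forces
\[
\gamma_{n}=\frac{b_{k-1,n}}{b_{k-1,n-1}}\,\gamma_{n-k+1},\qquad n\ge k,
\]
and comparing with the second identity in (\ref{sym_gamma}), which reads $\tilde\gamma_{n}=\frac{b_{k-1,n}}{b_{k-1,n-1}}\gamma_{n-k+1}$ for $n\ge k$, we obtain $\tilde\gamma_{n}=\gamma_{n}$ for $n\ge k$. Together with $\tilde\beta_{n}=0=\beta_{n}$, this shows that the recurrence coefficients of $\{Q_{n}\}_{n\ge0}$ and $\{P_{n}\}_{n\ge0}$ coincide for every index $\ge k$.

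It remains to deduce $Q_{n}^{[k+1]}(x)=P_{n}^{[k+1]}(x)$. The associated polynomials of order $k+1$ are generated from the common initial data $P_{-1}^{[k+1]}=Q_{-1}^{[k+1]}=0$, $P_{0}^{[k+1]}=Q_{0}^{[k+1]}=1$ by three-term recurrences whose coefficients involve only $\beta_{j},\tilde\beta_{j}$ with $j\ge k+1$ and $\gamma_{j},\tilde\gamma_{j}$ with $j\ge k+2$; all these indices exceed $k$, hence the corresponding coefficients agree, and a straightforward induction on $n$ yields $Q_{n}^{[k+1]}=P_{n}^{[k+1]}$ for all $n$. There is no real obstacle in this argument — it is bookkeeping on top of Steps~1 and~2 — the only points deserving care being that the constancy of $a_{n}$ has to be traced back to $n=k-1$ (so that $\tilde\beta_{n}=0$ already holds at $n=k$) and that one works with the alternative forms of $\tilde\beta_{n}$ and $\tilde\gamma_{n}$ in (\ref{sym_gamma}) rather than with their defining expressions.
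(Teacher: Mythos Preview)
Your argument is correct and follows exactly the same route as the paper: the proposition is stated immediately after Steps~1 and~2, which supply the two displayed equalities just as you reproduce them, and the identification $Q_{n}^{[k+1]}=P_{n}^{[k+1]}$ is drawn as a direct consequence of the coincidence of the shifted recurrence coefficients. Your explicit induction for the associated polynomials merely spells out what the paper leaves implicit.
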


Here, for a fixed positive integer number $s$, we denote by  $\{ P^{[s]}_{n}\}_{n\geq0}$ the sequence of polynomials  satisfying the three-term recurrence relation 
\begin{equation*}
x  P^{[s]}_{n}\left (x \right) =  P^{[s]}_{n+1}(x) +\beta _{n+s} P^{[s]}_{n}\left(
x\right) +\gamma _{n+s}  P^{[s]}_{n-1}(x), \ \ n\geq 0,
\end{equation*}
with  initial conditions $P^{[s]}_{-1}(x) =0$, $ P^{[s]}_{0}\left(
x\right) =1.$ It  is said to be the sequence of associated monic polynomials of order $s$ for the linear functional $u$ (see \cite{14}).  \\

\noindent \textbf{Step 3. }  We keep $b_{1,n} = b_{1}$ and $b_{2,n} = b_{2}$, for $n \geq k$, and we add the constrain $b_{3,n} = b_{3}$, for $n \geq k$. Since from (\ref{eq_for_b_i_sym}), with $i=1$,
\begin{eqnarray*}
b_{3,n+1}  & = &  b_{3,n} +b_{1} \left(\gamma_{n-1} -  \gamma _{n}\right), \ \ n \geq k+1,
\end{eqnarray*}
then
$
b_{1}(\gamma_{n-1} - \gamma_{n})=0.
$
Thus, either $b_{1} =0$ or $\gamma_{n}$ remains constant for $n \geq k$, that is,  $\gamma_{n}=\gamma_{k}$ for $n \geq k.$

If the coefficients $\gamma_{n}$ are constants for $ n \geq k$,  then $\{P_{n}\}_{n\geq0}$  is the sequence of anti-associated polynomials of order $k$ for the  Chebyshev polynomials of the second kind (see \cite{RonAss}). \\

\noindent \textbf{Step 4. } The other possibility is that $b_{1} = 0$, $b_{2,n} = b_{2}$ and $b_{3,n} = b_{3}$, for $n \geq k+1$. Now we add the restriction $b_{4,n} = b_{4}$, for $n \geq k+1$. Since, from (\ref{eq_for_b_i_sym}) with $i=2$,
\begin{eqnarray*}
b_{4,n+1} & = &   b_{4,n} +b_{2}(\gamma_{n-2} - \gamma_{n}), \ \ n \geq k+1,
\end{eqnarray*}
we obtain
\begin{eqnarray*}
b_{2}(\gamma_{n-2} - \gamma_{n})=0, \ \ n \geq k+1,
\end{eqnarray*}
and, again,  either $b_{2} =0$  or the sequence $\{\gamma _{n}\}_{n\geq k-1}$ is a periodic sequence with period 2. Thus  $\{P_{n}\}_{n\geq0}$ is the sequence of anti-associated polynomials of order $k-1$ of a 2-periodic sequence (see \cite{RonAss}).
We refer to  \cite[p.91]{14} for the explicit expression of symmetric orthogonal polynomials defined by recurrence relations whose coefficients are  2-periodic sequences.
Let  $S_{n}(x) = xS_{n-1}(x)- \gamma_{n}S_{n-2}(x)$, where $\gamma_{2n} = a >0$ and $\gamma_{2n+1} = b >0$. Then
\begin{eqnarray*}
S_{2n}(x) & = & (ab)^{n/2} \Big[ U_{n}(z) + \sqrt{b/a} \, U_{n-1}(z) \Big], \\
S_{2n+1}(x) & = & (ab)^{n/2} x U_{n}(z),
\end{eqnarray*}
where $z = (x^2 - (a+b) ) / (4ab)^{1/2}$. \\

\noindent \textbf{Step 5. } Yet another possibility is $b_{1} = 0$, $b_{2} = 0$, $b_{3,n} = b_{3}$ and $b_{4,n} = b_{4}$, for $n \geq k+1$. Following the previous reasoning let to add the restriction $b_{5,n} = b_{5}$, for $n \geq k+1$. Then (\ref{eq_for_b_i_sym}), for $i=3$, reads
\begin{eqnarray*}
b_{5,n+1}  & = &   b_{5,n} + b_{3}(\gamma_{n-3} - \gamma_{n}), \ \ n \geq k+1.
\end{eqnarray*}
Hence,
\begin{eqnarray*}
b_{3}(\gamma_{n-3} - \gamma_{n})=0, \ \ n \geq k+1.
\end{eqnarray*}
Then either $b_{3}=0$ or the sequence $\{\gamma _{n}\}_{n\geq k-1}$ is a 3-periodic one.

We can proceed in this way up to  $i=k-3$ using (\ref{eq_for_b_i_sym}), and periodic sequences appear in a natural way.

We will illustrate the above method in the case of Chebyshev polynomials of the second  kind.

\begin{exm}
\label{example_Un}
Let  $\{P_{n}\}_{n\geq0}$ be the sequence of monic Chebyshev polynomials of second kind $\{\tilde{U}_{n}\}_{n\geq0}$ orthogonal with respect to $d \mu(x) = (1-x^2)^{1/2}dx$ on $(-1,1)$. Then  $\beta_{n}=0$, $\gamma_n= 1/4$, $n \geq 1$ and
$(\ref{eq_for_b1_sym})$, $(\ref{eq_for_b2_sym})$ and $(\ref{eq_for_b_i_sym})$ become
\begin{eqnarray*}
b_{1,n+1} &=& b_{1,n} +  \frac{1}{4}\left(\frac{b_{k-2,n-1}}{b_{k-1,n-1}} - \frac{b_{k-2,n}}{b_{k-1,n}}\right), \ \ n \geq k \\
b_{2,n+1} &=& b_{2,n} +  \frac{1}{4}\left(1 -  \frac{b_{k-1,n}}{b_{k-1,n-1}} \right)
 + b_{1,n}\left(b_{1,n+1}-b_{1,n}\right),  \  n \geq k, \\
b_{i+2,n+1} & = &   b_{i+2,n} + \frac{1}{4}b_{i,n} + b_{i+1,n}\left(b_{1,n+1}-b_{1,n}\right)  \notag\\
&   & - b_{i,n-1}\left[\frac{1}{4} + b_{2,n} - b_{2,n+1} + b_{1,n} \left(b_{1,n+1}-b_{1,n}\right)  \right],
\end{eqnarray*}
for $1 \leq i \leq k-3$.

Assume that  $b_{1,n}=b_{1}$ for $n \geq k$, and $b_{2,n}=b_{2}$ for $n \geq k$. Then we have
\begin{equation*}
b_{i+2,n+1} = b_{i+2,n} + \frac{1}{4} \left(b_{i,n} - b_{i,n-1}  \right), \ \ 1 \leq i \leq k-3, \ \ n \geq k.
\end{equation*}
In particular,  according to the fact that  $b_{1,n}=b_{1}$ and $b_{2,n}=b_{2}$ for $n \geq k$, then
\begin{eqnarray*}
b_{3,n+1} & = & b_{3,n}, \ \ n \geq k+1, \\
b_{4,n+1} & = & b_{4,n}, \ \ n \geq k+1,
\end{eqnarray*}
and, as a consequence, for every $1 \leq i \leq k-3$,
\begin{eqnarray*}
b_{i+2,n+1} & = & b_{i+2,n}, \ \ n \geq k+1.
\end{eqnarray*}

On the other hand, if you assume, instead of $b_{1,n}=b_{1}$ and $b_{2,n}=b_{2}$ for $n \geq k$, that
$b_{k-1,n}=b_{k-1}$ and $b_{k-2,n}=b_{k-2}$ for $n \geq k$, a reverse situation in terms of the connection coefficients, then
 \begin{eqnarray*}
b_{1,n+1} & = & b_{1,n}, \ \ n \geq k+1, \\
b_{2,n+1} & = & b_{2,n}, \ \ n \geq k+1,
\end{eqnarray*}
and
\begin{equation*}
b_{i+2,n+1} = b_{i+2,n} + \frac{1}{4} \left(b_{i,n} - b_{i,n-1}  \right), \ \ 1 \leq i \leq k-3, \ \ n \geq k+1,
\end{equation*}
In particular, this means that
\begin{eqnarray*}
b_{i+2,n+1} & = & b_{i+2},  \ \ 1 \leq i \leq k-5, \ \ n \geq k+1.
\end{eqnarray*}
Notice that in this case
\begin{eqnarray*}
b_{1,k+1} &=& b_{1,k} + \frac{1}{4}\left(\frac{b_{k-2,k-1}}{b_{k-1,k-1}} - \frac{b_{k-2,k}}{b_{k-1,k}}\right),   \\
b_{2,k+1} &=& b_{2,k} + \frac{1}{4}\left(1 -  \frac{b_{k-1,k}}{b_{k-1,k-1}} \right),     \\
b_{i+2,k+1} &=& b_{i+2,k} + \frac{1}{4} \left(b_{i,k} - b_{i,k-1}  \right),  \ \ 1 \leq i \leq k-3.
\end{eqnarray*}

In other words, we have constant connection coefficients, but they appear for $n \geq k+1$.
\end{exm}

\begin{prop}
Let assume that $\{Q_{n}\}_{n\geq0}$ is a sequence of quasi-orthogonal polynomials of order $k-1$ with respect to the sequence $\{\tilde{U}_n\}_{n\geq0}$. If either $b_{1,n}=b_{1}$ and $b_{2,n}=b_{2}$ for $n \geq k$, or  $b_{k-1,n}=b_{k-1}$ and $b_{k-2,n}=b_{k-2}$ for $n \geq k$, then all the remaining connection coefficients are constant for $n \geq k+1$. Notice that if the initial conditions are  $b_{k-1,k}=b_{k-1,k-1}$ and $b_{k-2,k}=b_{k-2,k-1}$ then  all coefficients are constant for $n \geq k$.
In this case,
\begin{eqnarray*}
\tilde{\beta}_{n}  &=& 0, \ \ \ n\geq k, \\
\tilde{\gamma}_{n} &=& \frac{1}{4}, \ \ \ n\geq k+1.
\end{eqnarray*}
\end{prop}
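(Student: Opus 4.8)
The plan is to organise the computations already sketched in Example~\ref{example_Un} into a single argument resting entirely on Theorem~\ref{maintheorem}, specialised to $\{\tilde U_n\}_{n\ge0}$, the monic Chebyshev polynomials of the second kind, for which $\beta_n=0$ for $n\ge 0$ and $\gamma_n=\frac{1}{4}$ for $n\ge 1$. First I would record that with these values the symmetric-case conditions (\ref{eq_for_b1_sym})--(\ref{eq_for_b_i_sym}) take exactly the three forms displayed at the start of Example~\ref{example_Un}; the structural point is that the first can be written as $b_{1,n+1}-b_{1,n}=\frac{1}{4}\bigl(\frac{b_{k-2,n-1}}{b_{k-1,n-1}}-\frac{b_{k-2,n}}{b_{k-1,n}}\bigr)$, the second as $b_{2,n+1}-b_{2,n}=\frac{1}{4}\bigl(1-\frac{b_{k-1,n}}{b_{k-1,n-1}}\bigr)+b_{1,n}\bigl(b_{1,n+1}-b_{1,n}\bigr)$, and that as soon as these two differences vanish the remaining relation (\ref{eq_for_b_i_sym}) collapses to
\begin{equation*}
b_{i+2,n+1}-b_{i+2,n}=\frac{1}{4}\bigl(b_{i,n}-b_{i,n-1}\bigr),\qquad 1\le i\le k-3 .
\end{equation*}

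Next I would dispatch the two hypotheses, each of which ends up supplying exactly the assumption needed for this collapse. If $b_{1,n}=b_1$ and $b_{2,n}=b_2$ for $n\ge k$, then the left-hand sides of the first two relations vanish for $n\ge k$, which forces $b_{k-2,n}/b_{k-1,n}$ to be constant and $b_{k-1,n}=b_{k-1,n-1}$, hence $b_{k-1,n}$ and then $b_{k-2,n}$ constant from $n\ge k-1$ on; here the ratio is legitimate because $b_{k-1,n}\ne0$ by quasi-orthogonality. If instead $b_{k-1,n}=b_{k-1}$ and $b_{k-2,n}=b_{k-2}$ for $n\ge k$, then $b_{k-2,n}/b_{k-1,n}$ and $b_{k-1,n}/b_{k-1,n-1}$ are eventually the constants $b_{k-2}/b_{k-1}$ and $1$, so the first two relations give $b_{1,n+1}=b_{1,n}$ and $b_{2,n+1}=b_{2,n}$ for $n\ge k+1$. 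In either situation the collapsed recurrence above holds on a tail of the integers, and I would propagate the constancy upward in the index by a two-step induction: writing $\delta_i(n):=b_{i,n}-b_{i,n-1}$, it reads $\delta_{i+2}(n+1)=\frac{1}{4}\,\delta_i(n)$, so vanishing of $\delta_1$ and $\delta_2$ on a tail forces vanishing of $\delta_3,\delta_4$ on a tail shifted by one step, then of $\delta_5,\delta_6$ on a further shift, and so on; thus every $b_{i,n}$ is eventually constant, with $b_{3,n}$ and $b_{4,n}$ already constant for $n\ge k+1$. The parenthetical remark in the statement is the observation that if in addition $b_{k-1,k}=b_{k-1,k-1}$ and $b_{k-2,k}=b_{k-2,k-1}$, then the ``base'' constancy is available one step earlier, which shifts the whole cascade down and makes all the $b_{i,n}$ constant for $n\ge k$.

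Finally, the recurrence coefficients of $\{Q_n\}_{n\ge0}$ require only the two extreme connection coefficients. By (\ref{7}) together with $\beta_n=0$ we have $\tilde{\beta}_n=b_{1,n}-b_{1,n+1}$, which is $0$ once $b_{1,n}$ has stabilised, in particular for $n\ge k$ in the situation with matched initial data; and by (\ref{gamma_tilde_gamma}) together with $\gamma_n=\frac{1}{4}$ we have $\tilde{\gamma}_n=\frac{b_{k-1,n}}{b_{k-1,n-1}}\,\gamma_{n-k+1}=\frac{1}{4}\,\frac{b_{k-1,n}}{b_{k-1,n-1}}$ for $n\ge k$, which equals $\frac{1}{4}$ exactly when both $b_{k-1,n}$ and $b_{k-1,n-1}$ lie in the constant range, i.e.\ for $n\ge k+1$. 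I expect the one genuinely delicate point to be the index bookkeeping in the inductive cascade---determining at each level the first $n$ from which the coefficient is constant, since passing from $b_i$ to $b_{i+2}$ costs one step of the tail---whereas the algebraic simplifications themselves are immediate consequences of Theorem~\ref{maintheorem} and of the identities $\beta_n\equiv0$, $\gamma_n\equiv\frac{1}{4}$.
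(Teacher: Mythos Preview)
Your proposal is correct and follows essentially the same route as the paper, which does not give a separate proof of this proposition but derives it from the computations in Example~\ref{example_Un}: specialise Theorem~\ref{maintheorem} to $\beta_n\equiv0$, $\gamma_n\equiv\tfrac14$, observe that either hypothesis forces $b_{1,n+1}-b_{1,n}=b_{2,n+1}-b_{2,n}=0$ on a tail, and then propagate constancy through the collapsed recurrence $b_{i+2,n+1}-b_{i+2,n}=\tfrac14(b_{i,n}-b_{i,n-1})$. Your treatment is in fact more scrupulous than the paper's on the index bookkeeping: the paper asserts that all $b_{i+2,n}$ are constant for $n\ge k+1$, whereas your cascade analysis (correctly) shows that each pass from $b_i$ to $b_{i+2}$ shifts the tail by one step, so the stated uniform threshold $n\ge k+1$ is not literally delivered by the argument except for $b_3$ and $b_4$; the claimed values of $\tilde\beta_n$ and $\tilde\gamma_n$, however, depend only on $b_{1,n}$ and $b_{k-1,n}$, and your derivation of those is clean.
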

This means that the SMOP $\{Q_{n}\}_{n\geq0}$  has the same sequence of ($k+1$)-associated polynomials that the SMOP $\{\tilde{U}_{n}\}_{n\geq0}$. In other words it  is an anti-associated SMOP of order $k+1$ of the Chebyshev polynomials of second kind.

\subsection{Non-symmetric case}

Notice that key information for the sequence $\{Q_{n}\}_{n\geq0}$ is given by the sequences  $\{b_{1,n}\}_{n\geq0}$ and $\{b_{2,n}\}_{n\geq0}$
or, alternatively, by the sequences $\{b_{k-2,n}\}_{n\geq k-1}$ and $\{b_{k-1,n}\}_{n\geq k-1}$    because
\begin{eqnarray*}
\tilde{\beta}_{n}  &=&  \beta _{n} +b_{1,n}-b_{1,n+1}, \ \  n\geq 0, \\
\tilde{\gamma}_{n} &=&  \gamma _{n}+b_{2,n}-b_{2,n+1}+b_{1,n}\left( \beta_{n-1}-\beta _{n}-b_{1,n}+b_{1,n+1}\right), \ \ n\geq 1, \\
\tilde{\gamma}_{n} &=&  \gamma _{n-k+1} \frac{b_{k-1,n}}{b_{k-1,n-1}}, \ \ n \geq k.
\end{eqnarray*}

If for $n \geq k$ the coefficients $b_{1,n}$ and $b_{2,n}$ do not depend on $n$,  i.e. $b_{1,n}=b_{1}$ and $b_{2,n}=b_{2}$, we have
\begin{eqnarray*}
\tilde{\beta}_{n}  &=&\beta _{n}, \ \ \ n \geq k, \\
\tilde{\gamma}_{n} &=&\gamma _{n}+b_{1}\left( \beta_{n-1}-\beta _{n}\right), \ \ \ n \geq k.
\end{eqnarray*}
On the other hand, if $b_{k-1,n}$ and $b_{k-2,n}$ are constant coefficients, for $n \geq k$, i.e. $b_{k-1,n}=b_{k-1}$ and $b_{k-2,n}=b_{k-2}$, it follows from (\ref{7}), (\ref{5}) and (\ref{6}) that
\begin{eqnarray*}
\tilde{\beta}_{n}  &=&  \beta _{n-k+1} +\frac{b_{k-2}}{b_{k-1}} (\gamma_{n-k+2}-\gamma_{n-k+1}), \ \ \ n \geq k, \\
\tilde{\gamma}_{n} &=&  \gamma _{n-k+1}, \ \ \ n \geq k.
\end{eqnarray*}

\begin{exm}
Let  $\{P_{n}\}_{n\geq0}$ be the sequence of either monic Chebyshev polynomials of third kind $\{\tilde{V}_{n}\}_{n\geq0}$,  orthogonal with respect to  $d \mu(x) = (1+x)^{1/2} (1-x)^{-1/2} dx$ on $(-1,1)$,
or monic Chebyshev polynomials of fourth kind $\{\tilde{W}_{n}\}_{n\geq0}$,  orthogonal with respect to  $d \mu(x) = (1-x)^{1/2} (1+x)^{-1/2} dx$ on $(-1,1)$. In both cases there exists a representation
\begin{equation*}
   P_{n}(x) = \tilde{U}_{n}(x) + a \tilde{U}_{n-1}(x), \ \  n \geq 1,
\end{equation*}
where the coefficient $a$ depends on the choice of $P_{1}.$ For the Chebyshev polynomials of the third kind, with $\tilde{V}_{1}(x) = x - 1/2$, $a=-1/2$, and for the Chebyshev polynomials of the fourth kind,
with $\tilde{W}_{1}(x) = x + 1/2$, $a=1/2$, (see \cite[p.89]{14}).

Then
\begin{eqnarray*}
Q_{n}(x)   & = & P_{n}(x) + b_{1,n}P_{n-1}(x) + \cdots + b_{k-1,n}P_{n-k+1}(x) \\
    & = &  \tilde{U}_{n}(x) + (a+ b_{1,n}) \tilde{U}_{n-1}(x) + (a b_{1,n} + b_{2,n}) \tilde{U}_{n-2}(x) +  \cdots \\
  & & + (a b_{k-2,n}+b_{k-1,n})  \tilde{U}_{n-k+1}(x) + a b_{k-1,n} \tilde{U}_{n-k}(x).
\end{eqnarray*}

Thus, this problem is reduced to the one concerning Chebyshev polynomials of the second kind.

Notice that if  $b_{1,n} = b_{1}$ and $b_{2,n} = b_{2}$, for $n \geq k$, according to Example \ref{example_Un}, this yields
\begin{eqnarray*}
a b_{i,n}+ b_{i+1,n} &=& \tilde{b}_{i+1} \  \ 2 \leq i \leq k-2, \\
a b_{k-1,n} &=& \tilde{b}_{k-1}.
\end{eqnarray*}
The same analysis applies when we assume $b_{k-1,n} = b_{k-1}$ and $b_{k-2,n} = b_{k-2}$, for $n \geq k$.
\end{exm}

\begin{exm}
Let  $\{P_{n}\}_{n\geq0}$ be the sequence of monic Laguerre polynomials  $\{\tilde{L}_{n}^{(\alpha)}\}_{n\geq0}$, orthogonal with respect to $d \mu(x) = x^{\alpha} e^{-x} dx $ on $(0,\infty)$, $\alpha>-1$.
In this situation, $\beta_{n}=2n+\alpha+1$ for  $n \geq 0$, and $\gamma_n= n(n+\alpha)$ for $n \geq 1$. Consider the case when $b_{1,n}=b_{1}$ and $b_{2,n}=b_{2}$, for $n \geq k$. It follows from $(\ref{eq_for_b2})$ that
\begin{equation*}
   \frac{b_{k-1,n}}{b_{k-1,n-1}}\gamma _{n-k+1} =  \gamma _{n}
 + b_{1}\left( (2(n-1)+\alpha+1)- ( 2n+\alpha+1) \right),  \ \ n \geq k,
\end{equation*}
\begin{equation*}
   \frac{b_{k-1,n}}{b_{k-1,n-1}} =  \frac{\gamma_{n}-2 b_{1}}{\gamma _{n-k+1}}.  \  n \geq k,
\end{equation*}

\noindent \textbf{Step 1. } If $b_{1} = 0$, then
\begin{equation*}  
b_{k-1,n} =  \binom{n}{k-1} \binom{n+\alpha}{k-1} A(k,\alpha)  , \ \  n \geq k.
\end{equation*}
where $A(k,\alpha) $ does not depend on $n$. Therefore $b_{k-1,n}$ is a polynomial of degree $2k-2$ in $n$.

\noindent \textbf{Step 2. } If $b_{1} \neq 0$, then
\begin{eqnarray*}
 \frac{b_{k-1,n} }{b_{k-1,n-1} }
 %& = &  \frac{n(n+\alpha) -2b_{1}}{(n-k+1)(n-k+1+\alpha) } \\
 &=&  \frac{(n-\alpha_{1})(n-\alpha_{2})}{(n-k+1)(n-k+1+\alpha) } ,
\end{eqnarray*}
where $\alpha_{1}$, $\alpha_{2}$ are, in general, complex numbers such that
$ (n-\alpha_{1})(n-\alpha_{2}) = n(n+\alpha) -2b_{1}$.   Thus,
\begin{eqnarray*}
    \frac{b_{k-1,n} }{b_{k-1,k-1}}
    & = & \frac{ \binom{n-\alpha_{1}}{n-k+1}  \binom{n-\alpha_{2}}{n-k+1}   }{\binom{n-k+1+\alpha}{n-k+1}}, \ \ n \geq k.
\end{eqnarray*}
Then $b_{k-1,n}$  is a rational function.

From $(\ref{eq_for_b1})$ we have
\begin{eqnarray*}
 \frac{b_{k-2,n}} {b_{k-1,n}} \gamma_{n-k+2} =  \frac{b_{k-2,n-1}} {b_{k-1,n-1}} \gamma_{n-k+1}  +2k-2.
\end{eqnarray*}
Then
\begin{eqnarray*}
 \frac{b_{k-2,n}} {b_{k-1,n}} \gamma_{n-k+2} =  (2k-2)n + c_{1},
\end{eqnarray*}
where $c_{1}$ does not depend on $n$, and
\begin{eqnarray*}
    b_{k-2,n}
 & = &  ((2k-2)n+c_{1}) \frac{ \binom{n-\alpha_{1}}{n-k+2} \binom{n-\alpha_{2}}{n-k+2} } {\binom{n-k+2+\alpha}{n-k+2} }
 \frac{b_{k-1,k-1} }{(k-1-\alpha_{1})(k-1-\alpha_{2})}.
\end{eqnarray*}
 Then $b_{k-2,n}$  is also a rational function.

Now we look at the behaviour of the coefficients $b_{i,n}$ for $3 \leq i \leq k-3$, and $n \geq k$.

From $(\ref{gamma_tilde_gamma})$ and $(\ref{eq_for_b_i})$ with $i=1$, we have
\begin{eqnarray*}
b_{3,n+1} & = &  b_{3,n} + b_{2}(\beta_{n-2}-\beta_{n}) - b_{1}^{2}(\beta_{n-1}-\beta_{n}) + b_{1}(\gamma_{n-1} - \gamma_{n} )\\
& = &  b_{3,n} - 4b_{2} + b_{1}(2b_{1} + (n-1)(n-1+\alpha) - n(n+\alpha)  ),
\end{eqnarray*}
we see that $ b_{3,n}  = c_{3,2} n^2 + c_{3,1} n + c_{3,0} $ is a polynomial of degree two in $n$.

Also, from $(\ref{gamma_tilde_gamma})$ and $(\ref{eq_for_b_i})$ with $i=2$, we have
\begin{eqnarray*}
b_{4,n+1} & = & b_{4,n} + b_{3,n}(\beta_{n-3}-\beta_{n}) + b_{2}\gamma_{n-2} - b_{2} \left(\gamma_{n} -2 b_{1} \right) \\
          & = & b_{4,n} - 6b_{3,n} + b_{2}\left(\gamma_{n-2}-\gamma_{n}\right) + 2b_{1}b_{2} \\
          & = & b_{4,n} - 6\left( c_{3,2} n^2 + c_{3,1} n + c_{3,0}\right) + b_{2}\left[ (n-2)(n-2+\alpha) -n(n+\alpha) \right] + 2b_{1}b_{2},
\end{eqnarray*}
and
$
b_{4,n}  = c_{4,3} n^3 + c_{4,2} n^2 + c_{4,1}n +c_{4,0}
$
is a polynomial of degree three in $n$.

Suppose that $k=5$. If $b_{1}=0$, then the above relations yield that $b_{k-1,n}=b_{4,n}$  is a polynomial of degree eight. On the other hand, $b_{4,n}$ is a polynomial of degree three, which is a contradiction. Otherwise, if $b_{1}\neq 0$, according to the above calculations, $b_{k-1,n}=b_{4,n}$ and $b_{k-2,n}=b_{3,n}$ are rational functions of the variable $n$. However, $b_{3,n}$ and $b_{4,n}$ are polynomials of degrees two and three, respectively. This is a contradiction again.

We conclude that it is not possible that $b_{1,n} \neq 0$ and $b_{2,n} \neq 0$, $n \geq k$, are constant real numbers when you deal with Laguerre orthogonal polynomials.
\end{exm}

%%%%%%%%%%%%%%%%%%%%%%%%%%%%%%%%%%%%%%%%%%%%%%%%%%%%%%%%%%%%%%%%%%%%%%%%%%%%%%%%%%%%%%%%%%%
\subsection{All constant coefficients}

Now we consider the special case when all the coefficients in (\ref{2})  do not depend on $n$.
Let us apply Theorem \ref{maintheorem} to obtain the necessary and sufficient
conditions for  the orthogonality of the monic polynomial sequence
$\left\{ Q_{n}\right\} _{n\geq 0}$. Let $\left\{ P_{n}\right\} _{n\geq 0}$ be a SMOP with respect to a linear functional  $u$ and
\begin{equation}
Q_{n}(x) =P_{n}(x) +b_{1}P_{n-1}(x)+\cdots+b_{k-1}P_{n-k+1}(x), \ \ n\geq k,  %
\label{constant}
\end{equation}%
where $\left\{ b_{i}\right\} _{i=1}^{k-1}$ are real numbers, and $b_{k-1} \neq 0$.
The above  necessary and sufficient conditions become
\begin{eqnarray}
\gamma _{n-k+1} -\gamma _{n} &=& b_{1}\left( \beta _{n-1}-\beta _{n}\right),\  n\geq k+1,
\label{constant1} \\
 b_{i-1}\left( \gamma _{n-k+1}-\gamma _{n-i+1}\right)\!&\!=\!&\!b_{i}\left( \beta
_{n-i}-\beta _{n}\right), \  n\geq k+1, \  2\leq i\leq k-1,
\label{constant2}
\end{eqnarray}%
\begin{eqnarray*}
\tilde{\beta}_{n}  &=&\beta _{n}, \ \  n\geq k+1, \\
\tilde{\gamma}_{n} &=&\gamma _{n-k+1}, \ \  n\geq k+1, \\ 
\tilde{\gamma}_{n} &=& \gamma _{n}+b_{1}\left( \beta _{n-1}-\beta _{n}\right)
\neq 0, \ \  n\geq k, \nonumber
\end{eqnarray*}%
where $\{ \tilde{\beta}_{n}\} _{n\geq 0}$ and $\{ \tilde{\gamma}_{n}\} _{n\geq 1}$ are the coefficients of the three term recurrence relation satisfied by the SMOP $\{Q_{n}\}_{n\geq 0}$, for $n\geq k$.

These results were obtained \cite{3}. In that paper the
authors provide also a detailed study of the case $k=3 $ with constant coefficients. The case $k=4$ with constant coefficients was analysed thoroughly in \cite{24}.

Now we focus our attention on the case when the sequence $\{ P_n \}_{n \geq 0}$ is symmetric, i.e., $\beta_n =0$, for all $n \geq 0$.  The conditions $\left( \ref{constant1}\right)$  and  $\left( \ref{constant2}\right) $ yield  the necessary and sufficient conditions,  for $n\geq k+1$,
\begin{eqnarray}
\gamma _{n-(k-1)} - \gamma _{n}  &=& 0, \label{symmetric1} \\
b_{i-1}\left( \gamma _{n-(k-1)}-\gamma _{n-(i-1)}\right)  &=& 0,\ \ 2\leq i\leq k-1. \label{symmetric2}
\end{eqnarray}%
Then, as a consequence of Theorem  \ref{maintheorem}, we obtain

\begin{cor} \label{corosymmetric}
Let $\left\{ P_{n}\right\} _{n\geq 0}$ be a symmetric monic polynomial sequence
and let $\left\{ Q_{n}\right\} _{n\geq 0}$ be a monic polynomial sequence
defined by relation $\left( \ref{constant}\right) $, for $n \geq k$. Then
$\left\{ Q_{n}\right\} _{n\geq 0}$ is a SMOP with recurrence coefficients
$\{ \tilde{\beta}_{n}\} _{n\geq 0}$ and $\left\{ \tilde{\gamma}_{n}\right\} _{n\geq 1}$ if and only if
the sequence $\{\gamma _{n}\}_{n\geq2}$ satisfies $\left( \ref{symmetric1}\right) $ and
$\left( \ref{symmetric2}\right) $. Furthermore, the recurrence coefficients  of SMOP
$\left\{ Q_{n}\right\} _{n\geq 0}$  satisfy $\tilde{\beta}_{n}  = 0$ and
$\tilde{\gamma}_{n} = \gamma _{n},$ for $n\geq k+1$.  In other words, $Q_{n}^{[k+1]}(x) = P_{n}^{[k+1]}(x)$,
where $Q_{n}^{[k+1]}$ and $P_{n}^{[k+1]}$ are the associated polynomials of order $k+1$ for the SMOP $\left\{ Q_{n}\right\} _{n\geq 0}$and $\{ P_n \}_{n \geq 0},$  respectively (see \cite{14}).
\end{cor}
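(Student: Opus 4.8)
The plan is to obtain Corollary \ref{corosymmetric} as a direct specialization of Theorem \ref{maintheorem}. Recall that, as recorded just above in the constant-coefficient discussion, setting $b_{i,n}=b_i$ for all $n\geq k$ turns the recurrence relations $(\ref{eq_for_b1})$--$(\ref{eq_for_b_i})$ into the purely algebraic identities $(\ref{constant1})$ and $(\ref{constant2})$, while $(\ref{7})$, $(\ref{8})$ and $(\ref{gamma_tilde_gamma})$ give $\tilde\beta_n=\beta_n$ and $\tilde\gamma_n=\gamma_{n-k+1}=\gamma_n+b_1(\beta_{n-1}-\beta_n)$ for $n\geq k+1$. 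Imposing in addition $\beta_n\equiv 0$, the identities $(\ref{constant1})$ and $(\ref{constant2})$ become exactly $(\ref{symmetric1})$ and $(\ref{symmetric2})$, and the recurrence coefficients of $\{Q_n\}_{n\geq0}$ reduce to $\tilde\beta_n=0$, $\tilde\gamma_n=\gamma_{n-k+1}$ for $n\geq k+1$. Thus the corollary follows once the equivalence is phrased through Theorem \ref{maintheorem}.

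First I would treat sufficiency. Assuming $(\ref{symmetric1})$ and $(\ref{symmetric2})$, I must check that all hypotheses of Theorem \ref{maintheorem} hold for the constant sequences $b_{i,n}=b_i$, after completing $Q_0,\dots,Q_{k-1}$ by the backward Euclidean algorithm of $(\ref{EA})$ applied to $Q_k$ and $Q_{k-1}$, exactly as in the proof of Theorem \ref{maintheorem}. Concretely: the non-degeneracy condition $(\ref{not_null})$ collapses to $\gamma_n\neq 0$, which is automatic from the regularity of $u$; the defining relation $(\ref{constant})$ keeps its leading term $b_{k-1}\neq 0$ by hypothesis; and $(\ref{eq_for_b1})$--$(\ref{eq_for_b_i})$ hold precisely because, with $b_{i,n}=b_i$ and $\beta_n=0$, each of them is equivalent to one of $(\ref{symmetric1})$, $(\ref{symmetric2})$. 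Theorem \ref{maintheorem} then yields that $\{Q_n\}_{n\geq0}$ is a SMOP with the recurrence coefficients computed above.

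For necessity I would run Theorem \ref{maintheorem} in the other direction: if $\{Q_n\}_{n\geq0}$ defined by $(\ref{constant})$ is a SMOP, then $(\ref{eq_for_b1})$--$(\ref{eq_for_b_i})$ must hold, and under $b_{i,n}=b_i$ and $\beta_n=0$ these are nothing but $(\ref{symmetric1})$ and $(\ref{symmetric2})$. It remains to record the assertion $Q_n^{[k+1]}=P_n^{[k+1]}$: from $(\ref{7})$ and $(\ref{gamma_tilde_gamma})$ together with $(\ref{symmetric1})$ one gets $\tilde\beta_m=0=\beta_m$ and $\tilde\gamma_m=\gamma_{m-k+1}=\gamma_m$ for every $m\geq k+1$, so $\{P_n\}$ and $\{Q_n\}$ have the same recurrence coefficients after the shift by $k+1$; since the associated polynomials of order $k+1$ obey the recurrence with the shifted coefficients and the common initial data $Y_{-1}=0$, $Y_0=1$, they coincide term by term, i.e. $Q_n^{[k+1]}=P_n^{[k+1]}$ for all $n\geq0$.

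There is no genuine obstacle here; the proof is essentially bookkeeping. The one point deserving care is to verify that the collapse of the difference equations $(\ref{eq_for_b1})$--$(\ref{eq_for_b_i})$ under constancy of the $b_i$ produces \emph{exactly} the finite list $(\ref{symmetric1})$--$(\ref{symmetric2})$ and not some extra or missing relation, and to keep track of the index ranges ($n\geq k+1$ versus $n\geq k$) so that the stated validity ranges for $\tilde\beta_n$, $\tilde\gamma_n$ and for $Q_n^{[k+1]}=P_n^{[k+1]}$ are correct; in particular one should confirm that $(\ref{not_null})$ imposes no constraint beyond $\gamma_n\neq0$.
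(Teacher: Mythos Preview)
Your proposal is correct and follows exactly the paper's approach: the corollary is obtained as a direct specialization of Theorem \ref{maintheorem} to constant coefficients $b_{i,n}=b_i$ and $\beta_n=0$, whereby $(\ref{eq_for_b1})$--$(\ref{eq_for_b_i})$ collapse to $(\ref{symmetric1})$--$(\ref{symmetric2})$, condition $(\ref{not_null})$ reduces to $\gamma_n\neq 0$, and $(\ref{7})$, $(\ref{gamma_tilde_gamma})$ give $\tilde\beta_n=0$, $\tilde\gamma_n=\gamma_{n-k+1}=\gamma_n$ for $n\geq k+1$. The paper itself states the corollary without proof, simply citing it ``as a consequence of Theorem \ref{maintheorem}'', so your explicit bookkeeping is precisely what is intended.
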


Our next  result characterizes $\{\gamma _{n}\}_{n\geq2}$ as a periodic sequence and we  also discuss its possible periods.

\begin{thm}
Under the hypothesis of Corollary \ref{corosymmetric},  the sequence of the coefficients of the three-term recurrence relation  $\{\gamma _{n}\}_{n\geq2}$ must be a periodic sequence with period $j$, where $j$ is a divisor of $k-1$. Furthermore, if $|b_{j}|+|b_{k-1-j}| = 0$, for $1\leq j \leq \lfloor (k-1)/2 \rfloor$, then the period of the sequence $\{\gamma _{n}\}_{n\geq2}$ is $k-1$. If $(|b_{r}|+|b_{k-1-r}|)(|b_{s}|+|b_{k-1-s}|) \ldots (|b_{t}|+|b_{k-1-t}|) \neq 0$, for any $r,s,...,t$, such that $1\leq r,s,...,t \leq \lfloor (k-1)/2 \rfloor$, then the period of the sequence $\{\gamma _{n}\}_{n\geq2}$  is the greatest common divisor of $r,s,...,t$, and $k-1$.
\label{Theorem3.2}
\end{thm}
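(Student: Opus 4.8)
The plan is to recast $(\ref{symmetric1})$ and $(\ref{symmetric2})$ as statements about the set of periods of $\{\gamma_{n}\}_{n\ge 2}$ and then to read off the three assertions by elementary divisibility arguments. Put
$\mathcal{D}=\{\,d\ge 1:\gamma_{n+d}=\gamma_{n}\ \text{for all }n\ge 2\,\}$.
After the shift $m=n-(k-1)$, condition $(\ref{symmetric1})$ says exactly that $\gamma_{m+(k-1)}=\gamma_{m}$ for all $m\ge 2$, i.e. $k-1\in\mathcal{D}$; likewise, writing $j=i-1\in\{1,\dots,k-2\}$, condition $(\ref{symmetric2})$ says that $b_{j}\neq 0$ forces $\gamma_{m+(k-1-j)}=\gamma_{m}$ for all $m\ge 2$, i.e. $k-1-j\in\mathcal{D}$.

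The first thing to prove is a lemma: $\mathcal{D}$ is closed under addition and under subtraction of a smaller element from a larger one, hence $\mathcal{D}=\{p_{0},2p_{0},3p_{0},\dots\}$ where $p_{0}=\min\mathcal{D}$ is the fundamental period; in particular $p_{0}$ divides every element of $\mathcal{D}$ and $\gcd(d_{1},d_{2})\in\mathcal{D}$ whenever $d_{1},d_{2}\in\mathcal{D}$. Closure under addition is immediate, and for subtraction one observes that if $d_{1}>d_{2}$ are periods and $n\ge 2$, then $n+d_{1}-d_{2}\ge n\ge 2$, so applying $d_{2}$-periodicity at the index $n+d_{1}-d_{2}$ and then $d_{1}$-periodicity gives $\gamma_{n+d_{1}-d_{2}}=\gamma_{n+d_{1}}=\gamma_{n}$. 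The one point that needs genuine care here is the range $n\ge 2$: it is precisely the inequality $n+d_{1}-d_{2}\ge 2$ that legitimises the subtraction step, which is why the index bookkeeping must be kept explicit throughout.

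From $k-1\in\mathcal{D}$ we get $p_{0}\mid(k-1)$, which is the first assertion. For the refinements, note that if $1\le j\le k-2$ and $b_{j}\neq 0$, then $p_{0}\mid(k-1)$ and $p_{0}\mid(k-1-j)$, hence $p_{0}\mid j$ and so $p_{0}\mid\gcd(k-1,j)$; since $\gcd(k-1,j)=\gcd(k-1,k-1-j)$, replacing $j$ by $k-1-j$ yields the same conclusion. Consequently each index $m$ with $1\le m\le\lfloor(k-1)/2\rfloor$ and $|b_{m}|+|b_{k-1-m}|\neq 0$ imposes $p_{0}\mid\gcd(k-1,m)$, so altogether $p_{0}$ divides
$g:=\gcd\bigl(\{k-1\}\cup\{\,m:1\le m\le\lfloor(k-1)/2\rfloor,\ |b_{m}|+|b_{k-1-m}|\neq 0\,\}\bigr)$,
which is exactly the greatest common divisor of $k-1$ with the ``active'' indices $r,s,\dots,t$ appearing in the statement; when $|b_{j}|+|b_{k-1-j}|=0$ for all such $j$ this set is empty and $g=k-1$.

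Finally one must check that $g$ is sharp. Conversely to the above, if $\{\gamma_{n}\}_{n\ge 2}$ is any positive sequence whose fundamental period divides $g$, then $g\mid(k-1)$ gives $(\ref{symmetric1})$, and for each $j$ with $b_{j}\neq 0$ the integer $g$ divides $\gcd(k-1,j)$, hence divides $k-1-j$, so $(\ref{symmetric2})$ holds as well; by Corollary~\ref{corosymmetric} the resulting $\{Q_{n}\}_{n\ge 0}$ is a genuine SMOP. Thus the conditions of Corollary~\ref{corosymmetric} are equivalent to ``$g$ is a period of $\{\gamma_{n}\}$'', so $g$ is the largest period compatible with the hypotheses — which is the meaning of ``the period is $g$'' in the two refinements — while the fundamental period of a particular admissible sequence only has to divide $g$. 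I expect the main obstacle to be purely organisational: matching the index-folding $j\leftrightarrow k-1-j$ to the precise products $(|b_{r}|+|b_{k-1-r}|)\cdots(|b_{t}|+|b_{k-1-t}|)$ in the statement, and being careful about the one-sided index range in the period lemma; none of the individual computations is hard.
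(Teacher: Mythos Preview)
Your proposal is correct and follows essentially the same route as the paper: both arguments read $(\ref{symmetric1})$ as giving period $k-1$, read $(\ref{symmetric2})$ with $b_{j}\neq 0$ as giving period $k-1-j$ (equivalently $j$, once combined with $k-1$), and then pass to the greatest common divisor of the active indices. Your formalisation via the period set $\mathcal{D}$ and the explicit closure-under-subtraction lemma is tidier than the paper's one-line ``a periodic sequence with both periods $k-1$ and $j$ has period $\gcd(k-1,j)$'', and your final paragraph distinguishing ``$g$ is a period'' from ``$g$ is the fundamental period'' is a genuine clarification the paper leaves implicit, but the mathematical content is the same.
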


\begin{proof}
Conditions $\left( \ref{symmetric1}\right) $ and $\left( \ref{symmetric2}\right)$,  for $n\geq k+1$,   tell us that
if any coefficient $b_{j} \neq 0$, for  $1\leq j\leq k-2$, then
$ \gamma _{n-j}   =  \gamma _{n-(k-1)}  =  \gamma _{n}. $
Hence, we conclude that \\
$\bullet$ if any coefficient $b_{j} \neq 0$, for  $1\leq j \leq \lfloor (k-1)/2 \rfloor$, then
$ \gamma _{n-j} =  \gamma _{n}$ implies that $\{\gamma _{n}\}_{n\geq2}$ is a periodic sequence with period $j$; \\
$\bullet$ if any coefficient $b_{k-1-j} \neq 0$, for   $1\leq j\leq \lfloor (k-1)/2 \rfloor$, then
$\gamma _{n-(k-1-j)} =  \gamma _{n-(k-1)}$ implies that $\{\gamma _{n}\}_{n\geq2}$ is a periodic sequence with period $j$. 

As a summary, if $|b_{j}|+|b_{k-1-j}| \neq 0$, for any  $1\leq j\leq \lfloor (k-1)/2 \rfloor$, then
$\{\gamma _{n}\}_{n\geq2}$ is a $j$-periodic sequence.

The condition $\left( \ref{symmetric1}\right) $, i.e., $ \gamma _{n-(k-1)} = \gamma _{n}$,  for $n\geq k+1$, tell us that the sequence $\{\gamma _{n}\}_{n\geq2}$ is also a $k-1$-periodic sequence.

It is easy to see that a periodic sequence with both period $k-1$ and $j \leq \lfloor (k-1)/2 \rfloor$ has, in fact, period equals to the greatest common divisor of $k-1$ and $j$.  Since all divisors of $k-1$ but itself are included in $1\leq j\leq \lfloor (k-1)/2 \rfloor$, all choices of $b_{j}$ such that $|b_{j}|+|b_{k-1-j}| \neq 0$ yield the divisors in $1\leq j\leq \lfloor (k-1)/2 \rfloor$. Also the choice  $|b_{j}|+|b_{k-1-j}| = 0$ for $1\leq j\leq \lfloor (k-1)/2 \rfloor$ yields $k-1$ as the period.
\end{proof}

\begin{rem} i) If $|b_{j}|+|b_{k-1-j}| \neq 0$  for only one $j$ such that $1\leq j \leq \lfloor (k-1)/2 \rfloor$, then if $k-1$ is a multiple of $j$, the period of the sequence $\{\gamma _{n}\}_{n\geq2}$ is exactly $j$. \\
ii) Observe that to choose values for $|b_{r}|$ and $|b_{k-1-r}|$ one needs $k \geq 2r+1$.  \\
iii) Notice that the coefficient $\gamma_{1} > 0$ is free.
\end{rem}

\begin{rem}
If we consider a SMOP $\{ P_n \}_{n\geq0}$, such that $\beta _{n} = \beta$, for $n\geq0$, the conditions
$(\ref{constant1})$ and $(\ref{constant2})$ yield the same behaviour for $\{\gamma_{n}\}_{n\geq2}$  as in Theorem
$\ref{Theorem3.2}$ taking into account that it represents a shift in the variable for a symmetric SMOP.
\end{rem}

For the case $k=4$,  when the sequence $\{ P_n \}_{n\geq0}$
is not symmetric, in \cite{24} the authors also consider the choice $b_1=b_2=0$ and they prove that both sequences
$\{\gamma_{n}\}_{n=2}^{\infty}$ and $\{\beta_{n}\}_{n=2}^{\infty}$ must be $3$-periodic.
When one considers either only $b_{1} \neq 0$ or only $b_{2} \neq 0$, the behaviour of $\{\gamma_{n}\}_{n=2}^{\infty}$  and $\{\beta_{n}\}_{n=2}^{\infty}$ is  one-periodic.  Finally, with both $b_{1} \neq 0$ and $b_{2} \neq 0$ the behaviour of  $\{\gamma_{n}\}_{n=2}^{\infty}$ and $\{\beta_{n}\}_{n=2}^{\infty}$ depends on the values of $b_1$, $b_2$ and $b_3$.

\begin{rem}
Grinshpun \cite{Grinshpun2004} showed that Bernstein-Szeg\H{o}'s orthonormal polynomials of $i$-th kind, $i=1,2,3,4,$
and only them, can be represented as a linear combination of Chebyshev orthonormal polynomials of $i$-th kind,
respectively, with constant coefficients, namely
\begin{equation*}
\hat{Q}_{n}(x) =  \sum \limits_{j=0}^{k-1} t_{j} \hat{P}_{n-j}(x), \  \ n\geq k,
\end{equation*}
where $\{\hat{Q}_{n}\}_{ n\geq 0}$ denote the  Bernstein-Szeg\H{o} orthonormal polynomials of $i$-th  kind  and
$\{\hat{P}_{n} \}_{n\geq0}$ are the Chebyshev orthonormal polynomials of $i$th kind.

Sequences  of Bernstein-Szeg\H{o} polynomials are orthogonal with respect to the weight functions
\begin{equation*}
\omega_i(x) = \frac{\mu_i(x)}{\sigma_{k-1}(x)}, \ \ i=1,2,3,4,
\end{equation*}
where $\mu_i(x)$ is the Chebyshev weight function of the $i$th kind, $i=1,2,3,4,$ and  $\sigma_{k-1}(x)$ is a positive polynomial of degree $k-1$ on $(-1,1)$. The constants $t_{j}$ are given as the real coefficients of a polynomial $t(z)$ of degree $k-1$, that appears as the Fej\'{e}r-normalized representation of the positive polynomials $\sigma_{k-1}(x).$
Moreover, Grinshpun proves that if  $\left \{ P_{n}\right \}_{n\geq0}$ are the classical Chebyshev orthonormal  polynomials of one of the four kinds,  $\hat{Q}_{n}(x) =  \sum_{j=0}^{k-1} b_{j}\hat{P}_{n-j}(x), n\geq k,$
with $b_{0}b_{k-1}\neq0,$  and the polynomial $g(z)= \sum_{j=0}^{k-1} b_{j} z^{j}$ either does not have any zeros in the unit disc or all its zeros are located on the unit circle, then  either $\hat{Q}_{n}(x)$ or $\hat{Q}^{*}_{n}(x) =  \sum_{j=0}^{k-1} b_{j}\hat{P}_{n- k+1+j}(x), n\geq k$, are Bernstein-Szeg\H{o} polynomials of the corresponding kind.
\end{rem}

\textbf{Acknowledgements.}  We thank Dr.\,D.\,K. Dimitrov by his continued support. His comments and criticism have contributed to improve the presentation of the manuscript.

%%%%%%%%%%%%%%%%%%%%%%%%%%%%%%%%%%%%%%%%%%%%%%%%%%%%%%%%%%%%%%%%%%%%%%%%%%%

\bibliographystyle{amsplain}

\end{document}